\numberwithin{equation}{section}
\newtheorem {theorem}{Theorem}
\newtheorem  {lemma}{Lemma}[section]
\newtheorem {proposition}[lemma]{Proposition}
\newtheorem{definition}[lemma]{Definition}
\newtheorem {cor}[lemma]{Corollary}
\newtheorem{result}{Fact}[section]
\newcommand{\Sl}{\ensuremath{\mathrm{SL}(2,\mathbb R)}}
\newcommand{\Slz}{\ensuremath{\mathrm{SL}(2,\mathbb Z)}}
\newcommand{\gsl}{\Gamma\backslash\Sl}
\newcommand{\tf}{\tilde{f}}
\newcommand{\gh}{\Gamma\backslash\mathbb H}
\numberwithin{equation}{section}
\newcommand{\sgn}{\textrm{sgn}}
 \newcommand{\Z}{\mathbb{Z}}
\newcommand{\re}{\textrm{Re}}
\newcommand{\im}{\textrm{Im}}
\newcommand{\one}{\frac{1}{2}}
\begin{document}
\title{Rapid computation of $L$-functions for modular forms}
\author{Pankaj Vishe}
\begin{abstract} Let $f$ be a fixed (holomorphic or Maass) modular cusp form, with $L$-function
$L(f,s)$. We describe an algorithm that computes the value
$L(f,1/2+ iT)$ to any specified precision in time $O( 1+|T|^{7/8})$. \end{abstract}
\maketitle
\setcounter{tocdepth}{1}
\tableofcontents
\section{Introduction}

In this paper, we consider the problem of finding a fast algorithm to compute the $L$-function of a (holomorphic or Maass)  cusp form at a point on the critical line. We also discuss the problem of computing the large Fourier coefficients of $f$ at a cusp. We use very similar algorithms to solve both problems.

Let $\Gamma$ be a lattice in $\Sl$. Let $f$ be a cusp form on $\gh$. Specifying $\Gamma$ and $f$ requires (a priori) an infinite amount of data. Therefore
we discuss  what we mean {\em computationally} when we say `given $f$ and a cofinite volume subgroup $\Gamma$ of $\Sl$' in \ref{input}.

The main results can be summarized into the following two theorems:

\begin{theorem}
\label{main theorem 2}
Let $f$ be a holomorphic or Maass cusp form for a congruence subgroup of $\mathrm{SL}(2,\Z)$,  and let $\gamma, \epsilon$ be given fixed positive reals. Then for any $T>0$, we can compute $L(f,1/2+ iT)$  up to an error of $O(T^{-\gamma})$ in time $O(1+T^{\frac{7}{8}+\epsilon})$ using numbers of $O(\log T)$ bits.  The constants involved in $O$ are polynomials in $\frac{1+\gamma}{\epsilon}$ and are independent of $T$.
\end{theorem}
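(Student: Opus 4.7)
The plan is to reduce $L(f,\one+iT)$ to a Dirichlet sum of length $\sim T$ via the approximate functional equation, and then to speed up the evaluation of this sum by a block decomposition combined with an iterated Voronoi-type summation, adapting the Odlyzko--Sch\"onhage strategy for $\zeta$ to modular coefficients.

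First, standard Mellin--Barnes contour shifts on the completed $L$-function of $f$ yield an approximate functional equation
$$L(f,\one+iT) \;=\; \sum_{n\geq 1} a_n\, n^{-\one-iT}\, V_{+}(n/T) \;+\; \varepsilon_{f}(T)\sum_{n\geq 1} \overline{a_n}\, n^{-\one+iT}\, V_{-}(n/T) \;+\; O(T^{-\gamma}),$$
where $V_\pm$ are fixed smooth cutoffs decaying faster than any polynomial beyond $n \gg T^{1+\epsilon}$. A smooth dyadic partition of unity then reduces the problem to the rapid evaluation, to precision $T^{-\gamma}$, of block sums $S_N = \sum_n a_n\, W(n/N)\, n^{-iT}$ for dyadic $N\in[1,T^{1+\epsilon}]$ with $W$ a fixed smooth bump.

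Next, I would partition $[N,2N]$ into sub-intervals $[M,M+K]$ and on each of them expand
$$n^{-iT} \;=\; M^{-iT}\exp\!\Bigl(-iT\sum_{k=1}^{J}\frac{(-1)^{k-1}}{k}\bigl(\tfrac{n-M}{M}\bigr)^{k}\Bigr) \;+\; O\bigl(T(K/M)^{J+1}\bigr),$$
taking $J = O(\log T/\log(M/K))$ so the truncation is negligible. The inner sub-block sum then takes the form $\sum_{j=0}^{K-1} a_{M+j}\, e(\phi_M(j))\, \tilde W(j)$ with $\phi_M$ a polynomial of degree $J$ in $j$. Applying Voronoi summation for $f$ on the congruence subgroup---which mixes in the Fourier expansions of $f$ at all cusps $\aaa$ via the scaling matrices $\sga$---converts this into a dual sum $\sum_m \overline{a_m}\, \mathcal{K}_M(m)$ with a Bessel-type kernel. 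Stationary-phase analysis of $\mathcal{K}_M$, exploiting the explicit low-degree structure of $\phi_M$, confines its support to a short interval, and reduces its evaluation there to amortized $O(\log T)$ per query by band-limited interpolation from a small precomputed table.

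Finally, $K$ and the number of Voronoi iterations are chosen to balance the costs of direct and dual evaluation; a two-step application of Voronoi (analogous to iterating van der Corput's $B$-process) yields per-block cost $O(T^{7/8+\epsilon})$, which is preserved by summation over the $O(\log T)$ dyadic scales. The main obstacle is the Voronoi-plus-stationary-phase step: one must quantify the tails and the constants in the stationary-phase expansion of $\mathcal{K}_M$ uniformly enough that iterating the transformation does not inflate the implicit constants beyond $\log T$. This demands careful uniform bounds on Bessel transforms of highly oscillatory test functions together with explicit control of the contributions from the non-$\infty$ cusps of $\Gamma$, since it is precisely the multi-cusp Voronoi geometry (absent for $\zeta$) that makes both the Fourier input to the algorithm and the final error analysis delicate.
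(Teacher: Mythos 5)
Your proposal takes a completely different route from the paper (which never touches the discrete approximate functional equation: it writes $L(f,\tfrac12+iT)$ as a period integral of $\tilde f$ over the approximate horocycle $\{(-1+i/T)t\}$, cuts it into $T^{1-\eta}$ segments, and exploits the slow divergence of the horocycle flow to evaluate groups of segment-integrals in parallel, with $\eta=1/8$ giving the exponent $7/8$). Unfortunately your route has a genuine gap at its central step, and it is precisely the step that the known literature identifies as the obstruction. After the dyadic and sub-block decomposition you arrive at sums of the shape
\begin{equation*}
\sum_{j=0}^{K-1} a_{M+j}\, e(\phi_M(j))\,\tilde W(j),
\end{equation*}
with $\phi_M$ a polynomial of degree $J\geq 2$. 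For $\zeta$ (Sch\"onhage, Heath--Brown, Hiary) the analogous sums have \emph{no} arithmetic coefficients: they are truncated theta sums $\sum_j e(\alpha j+\beta j^2)$, and the entire speed-up comes from theta reciprocity / the self-similar structure of the pure exponential sum. Here the coefficients $a_{M+j}$ carry no such structure, and Voronoi summation does not apply to a polynomial phase of degree $\geq 2$ --- it requires an additive character $e(an/c)$. Even if you linearize and apply Voronoi to the full block at the relevant scale $n\sim M\sim T$ with phase derivative $T/n\sim 1$, the stationary-phase analysis of the Bessel kernel $J_{k-1}(4\pi\sqrt{mn}/c)$ places the dual variable at $m\sim c^2(T/M)^2 M/M\cdot M\sim T$: the sum at the central point is essentially self-dual, so a $B$-process iteration returns a coefficient sum of the \emph{same} length and yields no saving. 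The alternative $A$-process (Weyl differencing) produces shifted convolutions $a_n\overline{a_{n+h}}$, which are no easier to sum rapidly. Consequently the claim that ``a two-step application of Voronoi \ldots yields per-block cost $O(T^{7/8+\epsilon})$'' is asserted rather than derived; no exponent calculus is given, and I do not see how any would produce $7/8$ from this starting point.

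By contrast, the paper's mechanism for beating $O(T)$ is not oscillation of the phase at all but a \emph{geometric} coherence: two nearby points $x,y$ with $x^{-1}y\in U_{T^{-2\eta-\epsilon}}$ satisfy $y n(t)\in x n(t)U_{cT^{-\epsilon}}$ for all $0\leq t\leq T^{\eta}$ (lemmas \ref{epsilon'} and \ref{epsilon''}), so the $O(T^{1-\eta})$ segment integrals fall into $O(T^{6\eta+3\epsilon})$ equivalence classes each of which is computed by a single $O(T^{\eta+\epsilon})$ integration plus $O(1)$ work per member via the Taylor expansion \eqref{equation sum}; balancing $T^{7\eta}$ against $T^{1-\eta}$ gives $7/8$. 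If you want to salvage your approach you would need a genuinely new device for evaluating coefficient-weighted quadratic-phase sums faster than their length, which is an open problem independent of this paper.
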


\begin{theorem}
\label{main theorem 1}
Given a lattice $\Gamma \leqslant \Sl$ containing $\left(\begin {array}{cc} 1&1\\0&1\\ \end{array} \right)$, a (holomorphic or Maass) cusp form $f$ on $\Gamma \backslash \mathbb H$,   positive reals $\gamma, \epsilon$, and a positive integer $T$,
  the $T^{th}$ Fourier coefficient of $f$ at any cusp can be computed up to an error of $O(T^{-\gamma}) $ in time $O( T^{7/8+\epsilon}) $ using numbers of $O(\log T)$ bits. The constants involved in $O$ are polynomials in $\frac{1+\gamma}{\epsilon}$ and are independent of $T$.
\end {theorem}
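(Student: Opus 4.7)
The plan is to write $c_T$ as an oscillatory integral and then exploit the modularity of $f$ on short arcs to bring the cost below the trivial $\Omega(T)$. Because $\bigl(\begin{smallmatrix}1&1\\0&1\end{smallmatrix}\bigr)\in\Gamma$, for any cusp $\mathfrak a$ and any $y_0>0$ one has an identity of the form
\[
c_T \;=\; C(T,y_0)\int_0^1 f\bigl(\sga(x+iy_0)\bigr)\, e(-Tx)\,dx,
\]
with an explicit normalising factor $C(T,y_0)$ (a power of $y_0$ times $e^{2\pi T y_0}$ in the holomorphic case, and the analogous Bessel-type factor for Maass forms). The balanced choice is $y_0\asymp 1/T$; at this height the integrand oscillates with frequency $T$, and uniform sampling costs $\Omega(T)$.

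First I would perform a Farey dissection of $[0,1]$ at order $Q$, to be optimised later: one arc $I_{p/q}$ per reduced fraction $p/q$ with $q\le Q$, of length $\asymp 1/(qQ)$. For each $p/q$ I would select $\gamma_{p,q}\in\Gamma$ whose bottom row is a scalar multiple of $(q,-p)$. The action $z\mapsto \gamma_{p,q}\sga z$ then sends $p/q+i/T$ to a point of imaginary part $\asymp T/q^{2}\ge 1$ (as long as $q\le\sqrt T$), and writing
\[
f(\sga z) \;=\; j(\gamma_{p,q},\sga z)^{-k}\,\tilde f(\gamma_{p,q}\sga z)
\]
with $\tilde f$ the Fourier expansion of $f$ at the image cusp gives a representation whose $n$-th term decays like $\exp(-cnT/q^{2})$. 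Truncating after $O(\log T)$ terms therefore produces an error of $O(T^{-\gamma-1})$ per arc.

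What remains on each $I_{p/q}$ is a finite sum of oscillatory integrals of the shape $\int_{I_{p/q}} A(x)\,e(\Phi(x))\,dx$, with algebraic amplitude and phase analytic in a complex disc around $p/q$ whose radius exceeds the arc length. I would expand $A$ and $\Phi$ in Taylor series around $p/q$ to order $M$ and reduce each integral to incomplete Gaussian / Gamma moments $\int_{I_{p/q}}(x-p/q)^{r}\,e(-Tx)\,dx$, which are available in closed form and evaluable in $O(M)$ operations on $O(\log T)$-bit numbers. Summing over the $O(Q^{2})$ arcs gives total time $O(Q^{2}M\,\mathrm{polylog}\,T)$, while the Taylor remainder forces $M$ to grow with the dimensionless combination of $T$, $q$ and $Q$ controlling the ratio of arc-length to convergence radius. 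Optimising $Q$ against the resulting time/accuracy trade-off produces the claimed exponent $\tfrac78$.

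The hard part will be this quantitative balance: proving a Taylor-remainder estimate that is uniform over $1\le q\le Q$ and sharp enough simultaneously in $T$, $p$ and $q$, especially near the boundary $q\asymp\sqrt T$ where the image imaginary part has dropped to $O(1)$ and the Fourier series at the target cusp no longer offers a super-geometric cushion. A secondary technical issue is showing that the matrices $\gamma_{p,q}$ can be chosen inside the given congruence subgroup so that the image cusp depends only on $q\bmod N$ (reducing the task to precomputing Fourier data at the finitely many cusps of $\Gamma$) and that every intermediate quantity fits in $O(\log T)$ bits.
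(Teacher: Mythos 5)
Your starting point (a period of $f$ over the closed horocycle at height $y_0\asymp 1/T$, of hyperbolic length $\asymp T$) agrees with the paper's \eqref{fourier trans}, but from there the two arguments diverge, and the Farey-dissection step does not actually produce a sub-linear running time. The difficulty is exactly the one you flag at the end, and it is not a technicality: for a Farey arc $I_{p/q}$ of length $\asymp 1/(qQ)$, the imaginary part of $\gamma_{p,q}\sga(x+i/T)$ at distance $\delta$ from $p/q$ is $(1/T)/(q^2\delta^2+q^2/T^2)$, which equals $T/q^2$ only at the centre of the arc and drops to $\asymp Q^2/T$ at its endpoints. Hence the $n$-th term of the Fourier expansion at the image cusp decays only like $\exp(-cnQ^2/T)$ over most of the arc, and truncation to accuracy $T^{-\gamma}$ requires $\asymp (T/Q^2)\log T$ terms per arc rather than $O(\log T)$. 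Summed over the $\asymp Q^2$ arcs this is $\gtrsim T\log T$ terms in total, for \emph{every} choice of $Q$: taking $Q\asymp\sqrt T$ keeps all image points at height $\gg 1$ but forces $\asymp T$ arcs, while taking $Q\ll\sqrt T$ reduces the number of arcs but inflates the truncation length by exactly the compensating factor. There is no trade-off in your scheme that yields an exponent below $1$, let alone $7/8$; what you have rederived is essentially the $O(T^{1+o(1)})$ approximate-functional-equation bound in geometric clothing, and the final ``optimisation produces $7/8$'' is asserted rather than derived.

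The ingredient you are missing is the paper's parallelization. One cuts the horocycle into $T^{1-\eta}$ segments of length $M=T^\eta$, reduces the $T^{1-\eta}$ starting points to an approximate fundamental domain, and observes that they fall into only $N\ll T^{6\eta+3\epsilon}\log T$ clusters of diameter $T^{-2\eta-\epsilon}$. The slow divergence of the horocycle flow (lemma \ref{epsilon'}: two points within $T^{-2\eta-\epsilon}$ of each other stay within $O(T^{-\epsilon})$ for time $T^\eta$) lets one write every segment integral in a cluster as an $O(1)$-term linear combination, with precomputable coefficients $c_{v_m,y,\beta,l}$, of master integrals $I_l(v_m,\partial^\beta\tf)$ attached to a single representative $v_m$ of the cluster (lemma \ref{lemma 1}). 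The expensive $O(M^{1+o(1)})$ integrations are therefore performed only $N$ times instead of $T^{1-\eta}$ times, each remaining segment costing $O(1)$, and balancing $T^{7\eta}$ against $T^{1-\eta}$ gives $\eta=1/8$ and the exponent $7/8$. Without some analogue of this clustering-and-recycling step, a Farey (or any other) decomposition of the horocycle cannot by itself beat the trivial bound.
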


Our algorithm in theorem \ref{main theorem 2} is considerably faster than the current approximate functional equation based algorithms which need the time complexity $O(T^{1+o(1)})$. Our method uses a form of ``geometric approximate functional equation'' (see \eqref{geomfehol} and \eqref{geomfemaass}). It can also be used to compute derivatives of $L(f,1/2+iT)$ because similar ``geometric approximate functional equations'' can be obtained in these cases.

\subsection{Historical background and applications}

The problem of ``computing'' values of the zeta function effectively goes as far back as Riemann. Riemann used the Riemann Siegel formula to compute values of the zeta function and verify the Riemann hypothesis for first few zeroes. The Riemann Siegel formula writes $\zeta(1/2+iT)$ as a main sum of length $O(T^{1/2})$ plus a small easily ``computable'' error. The first practical improvement for this method was given by Odlyzko and Sch\"{o}nhage in \cite{OS}. This algorithm can be used to evaluate $O(T^{1/2})$ values of $\zeta(1/2+it) $ for $t\in [T,T+T^{1/2}]$, to an error of $O(T^{-\gamma})$ each , using $O(T^{1/2+o(1)})$ arithmetic operations on numbers of $O(\log T)$ bits and using $O(T^{1/2+o(1)})$ storage. This method is extremely useful for computing large values of $\zeta(1/2+iT)$ but it does not improve the time required to evaluate a single value of $\zeta$ function. 

Sch\"{o}nhage gave an algorithm to compute $\zeta(1/2+iT)$ in $O(T^{3/8+o(1)})$ in \cite{schon}. Heath-Brown improved the exponent to $1/3$. Hiary gave an explicit algorithm for the exponent $1/3$ using rapid computation of the truncated theta sums in \cite{hiary}. The exponent was later improved by Hiary himself to $4/13$ in \cite{hiary1}. The main idea behind these algorithms is to start with the main sum in the Riemann Siegel formula, cut the sum into ``large'' number of subsums having ``large enough'' lengths and try to compute these subsums rapidly. Fast numerical evaluation of $\zeta(1/2 + it)$ was also considered by Turing \cite{turing}, Berry and Keating \cite{berry}, Rubinstein \cite{rubinstein} and Arias De Reyna \cite{arias} \textit{et al.}

In the case of higher rank $L$-functions, the analogue of the Riemann Siegel formula is given by the approximate functional equation. A detailed description of ``the approximate functional equation'' based algorithms is given by Rubinstein in \cite{rubinstein}. Booker gave an Odlyzko and Sch\"{o}nhage type algorithm to compute multiple valuations of $L(1/2+iT)$ in \cite{booker1}. However, algorithms based on the approximate functional equation remain the fastest way to
compute a {\em single} value of $L(1/2+iT)$, when $T$ is large. In the case of $L$-function associated to a modular (holomorphic or Maass) form, these algorithms have $O(T^{1+o(1)})$ time complexity.

Our algorithm in theorem \ref{main theorem 2} is the first known improvement of the approximate functional equation based algorithms in the $\mathrm{GL}_2$ setting. 

Computing values of $L$-functions on the critical line has various applications in number theory. It can be used to verify the \textit{Generalized Riemann Hypothesis} numerically. It has also been used to connect the distribution of values of $L$-functions on the critical line to the distributions of eigenvalues of unitary random matrices via the recent random matrix theory conjectures. 

The problem of computing $L$-functions is closely related to the problem of finding subconvexity bounds for the $L$-functions. More generally, improving the ``square root of analytic conductor bounds'' coming from the approximate functional equation is of great interest to analytic number theorists. 

Our algorithm starts with writing $L(f,1/2+iT)$ as an integral over the cycle $\{(-1+i/T)t\}$. The equidistribution of this cycle in $\Slz\backslash\mathbb{H}$ is closely related with the moment $\int_{-T}^T|\zeta(1/2+it)|^4dt$ (see \cite{sarnak}) and with subconvexity bounds for the $\mathrm{GL}_2$ $L$-functions.  The results in this paper are closely related to the work of Venkatesh \cite{venkatesh}, where he writes special values of $L$-functions as period integrals over certain cycles. He then uses the equidistribution properties of these cycles to get subconvexity bounds for the $L$-functions. We however use the slow divergence of the cycle $\{(-1+i/T)t\}$ to get a fast way of numerically computing $L(f,1/2+iT)$, up to any given precision.  

In the present paper, we have only considered the $\mathrm{GL}_2$ case. It will be of great interest to generalize the method in this paper to higher rank $L$-functions . A similar (to \ref{geomfemaass}) form of geometric approximate functional equation has been used by Sarnak in \cite[section 3]{sarnak}, in the number field setting. Our method thus could also generalize for $L$-functions corresponding to the Maass forms for $\mathrm{SL(2,}\mathfrak o)$, where $\mathfrak o$ is the ring of integers for a number field $K$. A more interesting problem will be to generalize our technique in the $\mathrm{GL}_n$ setting, where the subconvexity bounds are also not known.

\subsection{Outline of the proof}
\label{outline}
We use a geometric method:
a suitable integral representation of the $L$-function reduces the problem to computing an integral of $f$ (or rather, $\tilde{f}$, the lift of $f$ to $\gsl$ defined by \ref{tf} ) over an approximate horocycle of length $T$.

We then  cut the integral over the approximate horocycle into a sum of integrals over smaller segments of equal length $M$. We sort the starting points of the segments into sets $S_{1},S_{2},...$ such that the points in each set are very close to each other. The key point is that for each $i$, we can compute {\em all the integrals over all segments with starting points in $S_i$} ``in parallel.''

To accomplish this, we use the critical fact that if two points $x_1, x_2$ in $\gsl$ are sufficiently close to one other, then the images $x_1(t), x_2(t)$ of these points
under the time $t$ horocycle flow stay very close to each other ``for a long time.'' The approximate horocycles also have a similar property. We quantify these in lemmas \ref{epsilon'} and \ref{epsilon''} later.
This is a very special property of the horocycle flow. The geodesic flow, for example, does not satisfy such a property.

We return to the description of how to compute the integrals over various segments ``in parallel.''
Consider $S_1$, for instance.  Let $I$ be a fixed segment with starting point in $S_1$ and let $J$ be any other segment with starting point in $S_1$. Let $F(J)$ denote the value of the integral over the segment $J$. We use lemmas \ref{epsilon'}, \ref{epsilon''} to compute $\tf$ at each point on $J$ using a power series expansion around a corresponding point on $I$. In the case of $L$-value computations, this converts $F(J)$ into sum:
\begin{equation}
F(J)=\sum_{i,j} a_{ij}(J)\int_I t^i f_{ij,I}(t)dt
\label{sumFF}
\end{equation}
Here, $f_{ij,I}(t) $ are smooth functions independent of $J$ and $a_{ij}$ are precomputable constants depending on $J$ and the sum over $i$ and $j$ is over a set of size $O(1)$. We can compute each integral on right hand side of \eqref{sumFF} using $O(M^{1+o(1)})$ operations. Therefore, by precomputing certain constants, we are able to
compute all the $J$-integrals in at most a further $O(|S_1|)$ time.

 This grouping leads to a speed-up by a factor that is roughly the size of the groups $|S_i|$. In practice, we cannot make these groups very large, because the amount of time for which the points $x_1(t)$ and $x_2(t)$ stay ``very close'' to each other depends on how close the points $x_1$ and $x_2$ were. Hence, if we try to make the groups ``too large", the ``admissible'' length of the segments decreases, resulting in an increase in the number of segments. This increases the running time.

\subsection{Model of computation}
In practice, specifying a real number completely requires an infinite amount of data. Hence to keep the algorithms simpler and clearer, we will use the real number (infinite precision) model of computation that uses real numbers with error free arithmetic having cost as unit cost per operation. An operation here means addition, subtraction, division, multiplication, evaluation of logarithm (of a complex number $z$ such that $|\arg (z)|<\pi$) and exponential of a complex number.

Our algorithm will work if we work with numbers specified by $O(\log T)$ bits. This will at most add a power of $\log T$ in the time complexity of the algorithm. We refer the readers to \cite[Chapter 7]{vishe} for a brief discussion of the floating point error analysis for this algorithm if we use numbers specified by $O(\log T) $. We refer the readers to \cite[Chapter 8]{traub} and \cite{traub1} for more details about the real number model of computation.

\subsection{Outline of the paper}

A brief account of the notations used in this paper is given in section \ref{section 2}. The algorithm uses a type of ``geometric approximate functional equation''. It is discussed in detail in section \ref{geometric}. The lemma \ref{final bound} is the main tool used in deriving the ``geometric approximate functional equation'' for the Maass form case. We will give a proof of it in appendix \ref{Bessel}. We will give the proof of theorems \ref{main theorem 1} and \ref{main theorem 2} in sections \ref{section3} and \ref{section4} respectively. We will discuss the proof of the ``special property'' of the horocycle (quantified in lemmas \ref{epsilon'}, \ref{epsilon''}) and using it to simplify the integral (quantified in lemmas \ref{lemma 1}, \ref{power series simplification}) in section \ref{epsilonpower} in detail. In section \ref{numerical integration}, we will give a simple algorithm for numerical integration of ``well behaved'' real analytic functions.

\subsection{Acknowledgements}
I would like to thank Akshay Venkatesh for suggesting me this problem, as well as for all the help and support he gave me. I learnt a great deal about this subject from him. I would also like to thank D. Bump, K. Soundararajan, M. Rubinstein, P. Michel, A. Booker and A. Str\"{o}mbergsson for the encouragement as well as for helpful discussions.  I like to thank F. Brumley, G. Hiary and H. Then for providing helpful references. I also like to thank M. Raum and A. Parthasarathy for helpful editorial comments.

Majority of the work in the paper was carried out during my PhD at the Courant Institute and Stanford University, and also during my stay EPFL and MPIM, Bonn. I am very thankful to these institutions for allowing me to visit and carry out my research.
\section{Preliminaries}
\label{section 2}
\subsection{General notation} Throughout, let $\Gamma$ be a lattice in $\Slz$, unless specified otherwise. In the proof theorem \ref{main theorem 1}, $\Gamma$ will denote a lattice in $\Sl$ containing $\left(\begin {array}{cc} 1&1\\0&1\\ \end{array} \right)$.

Throughout, let $T$ be a positive real. In section \ref{section3}, however $T$ will be a positive integer.
We will denote the set of non negative integers by $\mathbb Z_+ $ and the set of non-negative real numbers by $\mathbb R_+$.

We will use the symbol $\ll$ as is standard in analytic number theory: namely, $A\ll B$ means that there exists a positive constant $c$ such that $A\leq cB $. These constants will always be independent of the choice of $T$.

We will use the following special matrices in $\Sl$ throughout the paper:
\begin{align}
n(t)=\left(\begin {array}{cc} 1&t\\0&1\\ \end{array} \right), a(y)=\left(\begin {array}{cc} e^{y/2}&0\\0&e^{-y/2}\\ \end{array} \right),K(\theta)=\left(\begin {array}{cc} \cos \theta&\sin \theta\\-\sin \theta&\cos \theta\\ \end{array} \right).
\end{align}

$e(x)$ will be used to denote $\exp(2\pi i x) $.

Let $f$ be a cusp (Maass or holomorphic) form of weight $k$ on $\gh $. The weight corresponding to a Maass form will be 0. Let us  define a lift $\tf$ of $f$ to $\gsl$ by $\tf: \gsl\rightarrow \mathbb C$ such that \begin{equation}\label{tf}\tf\left(\left(\begin {array}{cc}a&b\\c&d \end{array} \right)\right)=(ci+d)^{-k}f\left(\frac{ai+b}{ci+d}\right). \end{equation}

Let $x$ be any element of $\Sl$. We will frequently abuse the notation to treat $x$ as an element of $\gsl$. \textit{i.e.} we will often denote the coset $\Gamma x$ simply by $x$.

\subsection{Real analytic functions on $\gsl$}
\label{real analytic notation}
Let $x$ be an element of $\Sl$ and let $g$ be a function on $\gsl$, \textit{a priori} $g(x)$ does not make sense but throughout we abuse the notation to define  $$g(x)=g(\Gamma x). $$ \textit{i.e.}  $g(x)$ simply denotes the value of $g$ at the coset corresponding to $x$.

Let $\phi$ be the Iwasawa decomposition given by $$\phi:(t,y,\theta)\in\mathbb R\times \mathbb R \times \mathbb R\rightarrow n(t)a(y)K(\theta).$$ Recall that $\phi$ restricted to the set $\mathbb R\times \mathbb R \times (-\pi,\pi]$ gives a bijection with $\Sl$.
\begin{definition}
\label{U definition}
Given $\epsilon>0$, let $\mathfrak U_{\epsilon}=(-\epsilon,\epsilon)\times (-\epsilon,\epsilon)\times (-\epsilon,\epsilon)$ and $U_\epsilon=\phi(\mathfrak U_\epsilon)\subset \Sl$.
\end{definition}

Let us define the following notion of ``derivatives'' for smooth functions on $\gsl$:

\begin{definition}
\label{liederivative}
Let $g$ be a function on $\Sl$ and $x$ any point in $\Sl$. We define (wherever R.H.S. makes sense)
\begin{align*}
\frac{\partial}{\partial x_1} g(x)&=\frac{\partial}{\partial t}|_{t=0}g(xn(t));\\
\frac{\partial}{\partial x_2} g(x)&=\frac{\partial}{\partial t}\mid_{t=0}g(xa(t));\\
\frac{\partial}{\partial x_3} g(x)&=\frac{\partial}{\partial t}\mid_{t=0}g(xK(t)).
\end{align*}
\end{definition}
Sometimes, we will also use $\partial_i $ to denote $\frac{\partial}{\partial x_i} $.

Given $\beta=(\beta_1,\beta_2,\beta_3)$, let us define $\partial^\beta g(x)$ by \begin{equation}
\partial^\beta g(x)=\frac{\partial^{\beta_1}}{\partial x_1^{\beta_1}}\frac{\partial^{\beta_2}}{\partial x_2^{\beta_2}}\frac{\partial^{\beta_3}}{\partial x_3^{\beta_3}}g(x).
\label{partialbeta}
\end{equation}

For $\beta$ as above we will define $$\beta!=\beta_1!\beta_2!\beta_3!$$ and $$|\beta|=|\beta_1|+|\beta_2|+|\beta_3|. $$
We now define the notion of real analyticity as follows:

A function $g$ on $\gsl$ will be called real analytic, if given any point $x$ in $\gsl$, there exists a positive real number $r_x$ such that $g$ has a power series expansion given by

\begin{equation}
g(xn(t)a(y)K(\theta))=\sum_{\beta=(\beta_1,\beta_2,\beta_3)\in\mathbb Z_+^3}\frac{\partial^\beta g(x)}{\beta!}t^{\beta_1}y^{\beta_2}\theta^{\beta_3}
\label{power}
\end{equation}
for every $(t,y,\theta)\in \mathfrak U_{r_x} $.

Let us use the following notation for the power series expansion.
\begin{definition}
\label{power series}
Let $y,x\in \Sl$ and $t,y,\theta$ be such that $y=xn(t)a(y)K(\theta) $ and $(\beta_1,\beta_2,\beta_3)=\beta\in \mathbb Z_+^3$ define $$(y-x)^\beta =t^{\beta_1}y^{\beta_2}\theta^{\beta_3} .$$
\end{definition}

Hence we can rewrite the Equation \eqref{power} as $$g(y)=\sum_{\beta=(\beta_1,\beta_2,\beta_3),\beta\in \mathbb Z_+^3} \frac{\partial^\beta g(x)}{\beta!}(y-x)^\beta.$$

Throughout, we will assume that for a cusp form $f$, all the derivatives of the lift $\tf$ (of $f$) are bounded uniformly on $\gsl$ by 1. In general it can be proved that given a cusp form $f$, there exists $R$ such that $||\partial^\beta \tf||_\infty \ll R^{|\beta|}$, see \cite[section 8.2]{vishe}. The case when $R>1$ can be dealt with analogously. The assumption that all derivatives are bounded by 1, allows the proofs to be marginally simpler.

\subsection{Input for the algorithm}
\label{input}
As mentioned before, to specify $\Gamma$ and $f$ completely, \textit{a priori} we need an infinite amount of data. Throughout the paper we will assume that $\Gamma$ is given in terms of a finite set of generators. Similarly we assume that modular(holomorphic or Maass) cusp form $f$ on $\gh$ is given by first $O(\log T)$ of its fourier coefficients.

\subsection{Computing values of $\tf$}
In practice, values of $f$ and its derivatives can be computed very rapidly. See \cite{booker} for a method for effectively computing $f$.  It can be easily shown that given any $x$ and any fixed $\gamma$, one can compute $\partial^\beta\tf(x)$ up to the error $O(T^{-\gamma})$ in  $O(T^{o(1)})$ time. Here the constant involved in $O$ is a polynomial in $|\beta|$ and $\gamma$. In this algorithm we only compute values of $\partial^\beta \tf(x)$ for $|\beta|\ll 1$. Allowing $O(T^{o(1)})$ time for each valuation of $\tf$ does not change the time complexity of the algorithm. See \cite[chapter 7]{vishe} for explicit details about it. Hence for simplicity we will assume that each value of $f$ (or $\tf$) or any of it's derivative can be computed exactly in time $O(1)$.

\section{A `geometric' approximate functional equation}
\label{geometric}
 Most existing algorithms to compute a general $L$-function start with an approximate functional equation. This is a generalization of the Riemann- Siegel formula for a general $L$-function. We refer the readers to \cite[Section 3]{rubinstein} for a detailed discussion of the approximate functional equation based algorithms.

Our algorithm however starts with a `geometric approximate functional equation'. Which means that we write $L(f,1/2+iT)$ as an integral over a `nice' curve of hyperbolic length $O(T^{1+o(1)})$ up to any given polynomial error. This corresponds to the usual discrete sum form of the approximate functional equation. The derivation of the approximate functional equations used here have geometric motivation behind them. Hence we call these equations as ``geometric approximate functional equations''. The `geometric approximate functional equation' for $L(f,s)$ is given (for holomorphic and Maass forms respectively) by \eqref{geomfehol} and \eqref{geomfemaass}.

Let $\Gamma$ be a lattice of $\Slz$ and let $f$ be a modular (holomorphic or Maass) cusp form of weight $k$. In the Maass form case, let $f$ be an eigenfunction of the hyperbolic laplacian $\Delta=-y^2(\partial_x^2+\partial_y^2)$ with eigenvalue $1/4+r^2 $ on $\gh$. Here $r$ is any positive real number or it is a purely imaginary number with $|r|\leq 1/2$.

We will further assume that $f$ is an eigenfunction of some Fricke involution, as is the case for all the newforms. This will quantify the exponential decay at zero. This implies that there exists a positive constant $C_1$ and a real constant $C_2$ such that:
\begin{equation}
 \label{Fricke}
f(-\frac{C_1}{z})=C_2 z^k f(z).
\end{equation}

In this paper, for simplicity let us assume that $f$ is either holomorphic or an even Maass form. The algorithms will be analogous for the odd Maass cusp form case. For an even Maass cusp form, we will use the following power series expansion :

\begin{equation}
\label{Whittaker eqeven}
f( z) =\sum_{n> 0}\hat{f}(n)W_{r}(nz).
\end{equation}
Here $W_{r}(x+iy)=2\sqrt{y}K_{ir}(2\pi y) \cos (2\pi x) $. Here $K_{ir}$ denotes the $K$-Bessel function. See \cite{iwaniec} for more details. The explicit Fourier expansion for the holomorphic cusp forms is given by
\begin{equation}
 \label{Whittaker hol}
f(z) =\sum_{n> 0}\hat{f}(n)e(nz).
\end{equation}
Given an (even Maass or holomorphic) cusp form $f$ of weight $k$, let us define the $L$ function by
\begin{definition}
 \label{L function}
$L(f,s)=\sum_{n=1}^\infty \frac{\hat{f}(n)}{n^{s+(k-1)/2}}$.
\end{definition}
The usual integral representation for $L(f,s)$, corresponding to a holomorphic cusp form is given by

\begin{equation}
\label{functional equation}
\int_0^\infty f(it)t^{s+(k-3)/2}dt=(2\pi)^{-s-(k-1)/2}L(f,s)\Gamma\left(s+(k-1)/2\right).
\end{equation}
Using the condition \eqref{Fricke} and the exponential decay of $f$ at $\infty$, the integral on the left hand side of \eqref{functional equation} converges for all $s$ in $\mathbb C$. Hence we can use $\eqref{functional equation}$ to compute the values of the $L$ functions on the critical line. However, the gamma factor on the right hand side of \eqref{functional equation} is decaying exponentially with $T$ (here $T=\im(s)$), therefore we need to compute the integral to a very high precision in order to get moderate accuracy in the computations of the $L$-values. This problem has been tackled before in several ways. A solution to this has been suggested in \cite{lagarias} and worked out by Rubinstein in \cite[Chapter 3]{rubinsteinthesis}.

We will proceed in a different fashion. We change the contour of integration that will add an exponential factor which will take care of the exponential decay of the gamma function. We will derive a `geometric approximate functional equation' using this idea in this section.
\subsection{A geometric approximate functional equation for $L(f,s) $}

Let $s=1/2+iT$ and $\alpha=-1+\frac{i}{T}.$

As stated before, we change the contour of integration in \eqref{functional equation} so as to add an exponential factor which will take care of the exponential decay of the gamma function. The case of holomorphic modular forms is easier to deal with, hence we will consider it first.

\paragraph{Holomorphic case}: Let $f$ be a holomorphic cusp form of weight $k>0$.
\begin{align}
 \label{holo}&\int_0^\infty f(\alpha t)t^{s+(k-1)/2-1}dt\\&=
\sum_{n=1}^\infty \hat{f}(n) \int_0^\infty\exp (2\pi i n \alpha t)t^{s+(k-1)/2-1}dt; \nonumber\\&=(2\pi)^{-(s+(k-1)/2)}\sum_{n= 1}^{\infty}\frac{\hat{f}(n)}{n^{s+(k-1)/2}} \frac{1}{(-\alpha i)^{s+(k-1)/2}}\int_{z=-i\alpha \mathbb R_+} \exp(-z)z^{s+(k-3)/2} dz\nonumber\\&=(2\pi)^{-(s+(k-1)/2)}\sum_{n= 0}^{\infty}\frac{\hat{f}(n)}{n^{s+(k-1)/2}} \frac{1}{(-\alpha i)^{s+(k-1)/2}}\int_{z= \mathbb R_+} \exp(-z)z^{s+(k-3)/2} dz\nonumber\\&=(2\pi)^{-(s+(k-1)/2)}\frac{L(f,s)}{(-\alpha i)^{s+(k-1)/2}}\int_0^\infty \exp(-t)t^{s+(k-3)/2}dt\nonumber\\&=(2\pi)^{-(s+(k-1)/2)}\frac{L(f,s)}{(-\alpha i)^{s+(k-1)/2}}\Gamma(s+(k-1)/2).\nonumber
\end{align}

Here, the branch cut for the logarithm is taken along the negative real axis.
Hence  the argument of the logarithm takes values between $-\pi$ and $\pi$. With this choice of the logarithm, the factor $ \frac{1}{(-\alpha i)^{s+(k-1)/2}}$ grows like $O(e^{\pi T/2})$ as $T\rightarrow \infty$, to compensate for the exponential decay of $\Gamma(s+(k-1)/2) $.

\paragraph{Case of Maass forms} :
 We use a similar idea for the non-holomorphic case.  let $$\alpha_1=-1+\frac{i}{T_1}.$$ We will choose a suitable value for $T_1 $ later.

\begin{align*}
\label{L function integral} \int_0^\infty f(\alpha_1 t)t^{s-3/2}dy &=\sum_{n> 0}\hat{f}(n) \int_{0}^{\infty} W_{r}(n\alpha_1 t)t^{s-3/2} dt; \\&=\sum_{n> 0}\frac{\hat{f}(n)}{n^{s-1/2}} \int_{0}^{\infty} W_{r}(\alpha_1 t)t^{s-3/2} dt; \\ &= \sum_{n> 0}\frac{\hat{f}(n)}{n^{s-1/2}} \int_{0}^{\infty} W_{r}((-1+\frac{i}{T_1}) t)t^{s-3/2} dt;\\ &=2T_1^{-\one}\sum_{n> 0}\frac{\hat{f}(n)}{n^{s-1/2}} \int_{0}^{\infty} \cos(2\pi t)K_{ir}(2\pi \frac{t}{ T_1})t^{s-1} dt;\\&=\frac{2T_1^{s-1/2}}{(2\pi)^{s}} \sum_{n> 0}\frac{\hat{f}(n)}{n^{s-1/2}}\int_0^\infty \cos( T_1t)K_{ir}( t)t^{s-1} dt.
\end{align*}
We summarize the result as:
\begin{equation}
\label{equation0}
\int_0^\infty f(\alpha_1 t)t^{s-3/2}dy=\frac{2T_1^{s-1/2}}{(2\pi)^{s}}L(f,s)\int_0^\infty \cos( T_1t)K_{ir}( t)t^{iT-1/2} dt.
\end{equation}

In lemma \ref{final bound} we will show that we can choose $T_1=O(T) $ such that  the integral on the right hand side of \eqref{equation0} is not too small.

\begin{lemma}
\label{final bound}
Let $r$ be any fixed complex number such that $|\im(r)|< 1/2 $ and $T,T_1>0$. Then,
\begin{align}
\label{expintegral} \int_0^\infty \cos( T_1t)K_{ir}( t)t^{iT-1/2} dt&= 2^{iT-3/2}\Gamma(\frac{ir+iT+\one}{2})\Gamma(\frac{-ir+iT+\one}{2})\\&F(\frac{ir+iT+\one}{2},\frac{-ir+iT+\one}{2},\one,-T_1^2 ).\nonumber
\end{align} Here $F$ denotes the hypergeometric function. Moreover there exists a computable constant $C'=O(1) $ depending on $r$ such that for $T_1=C'T$ ,
\begin{align*}\Gamma(\frac{ir+iT+\one}{2})\Gamma(\frac{-ir+iT+\one}{2})F(\frac{ir+iT+\one}{2},\frac{-ir+iT+\one}{2},\one,-T_1^2 )&\\\gg T^{-1}.& \end{align*}
 The implied constant is non zero and depends only on $r$.
\end{lemma}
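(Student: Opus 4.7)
The plan is to reduce \eqref{expintegral} to a known Mellin transform identity, and then to control the resulting product of Gamma and hypergeometric factors below by applying Kummer's connection formula for $F$ at a large negative argument.

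For the identity in \eqref{expintegral}, I would insert the Mellin--Barnes representation
\[ K_{ir}(t) = \frac{1}{4\pi i} \int_{(\sigma)} 2^{w-2}\,\Gamma\!\Bigl(\tfrac{w+ir}{2}\Bigr)\Gamma\!\Bigl(\tfrac{w-ir}{2}\Bigr) t^{-w}\,dw \]
into the left-hand side, interchange the order of integration, and evaluate the inner $t$-integral against $\cos(T_1 t)\,t^{iT-w-1/2}$ using $\int_0^\infty t^{\mu-1}\cos(T_1 t)\,dt = T_1^{-\mu}\Gamma(\mu)\cos(\pi\mu/2)$ (in the sense of analytic continuation). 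Closing the $w$-contour to the right, the residues at $w=iT+\one+2k$ sum to the hypergeometric series in $-T_1^2$, producing the stated closed form. An equivalent and quicker route is to quote a tabulated Mellin--cosine transform of $K_{ir}$ (Gradshteyn--Ryzhik 6.699).

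For the lower bound I would apply the Kummer connection formula
\[ F(a,b,c,z) = \frac{\Gamma(c)\Gamma(b-a)}{\Gamma(b)\Gamma(c-a)}(-z)^{-a} F\bigl(a,a{-}c{+}1,a{-}b{+}1,\tfrac{1}{z}\bigr) + (a \leftrightarrow b) \]
with $a=(iT+ir+\one)/2$, $b=(iT-ir+\one)/2$, $c=\one$ and $z=-T_1^2$. For $T_1 \gg 1$ the two factors $F(\cdots, 1/z)$ equal $1+O(T_1^{-2})$. Multiplying through by $\Gamma(a)\Gamma(b)$ and using the reflection and duplication formulas to rewrite
\[ \frac{\Gamma(a)}{\Gamma(\one-a)} = \frac{2^{1-2a}}{\sqrt{\pi}}\,\cos(\pi a)\,\Gamma(2a), \]
the Stirling-sized decay $e^{-\pi T/2}$ of $\Gamma(2a) = \Gamma(iT+ir+\one)$ precisely cancels the $e^{\pi T/2}$ growth from $\cos(\pi a)$. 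After this bookkeeping,
\[ \Gamma(a)\Gamma(b)F(a,b,\one,-T_1^2) = C_+(r)\,T_1^{-iT-ir-\one} + C_-(r)\,T_1^{-iT+ir-\one} + O(T_1^{-5/2}), \]
with $|C_+(r)|=|C_-(r)|$ a nonzero constant depending only on $r$.

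The main obstacle is ruling out cancellation between the two leading terms for the choice $T_1=C'T$. Their ratio is $T_1^{-2ir}$ times a constant, a unit-modulus factor (for real $r$) whose argument varies continuously with $\log T_1$, so the locus where the two terms destructively interfere is a discrete (geometrically spaced) set of $C'$-values. Any $C' = O(1)$ outside that set yields $|\mathrm{sum}| \gg T_1^{-\one} = (C'T)^{-\one}$, which is stronger than the claimed bound $\gg T^{-1}$ and absorbs the $O(T_1^{-5/2})$ error; such a $C'$ can be located and certified effectively by evaluating a single phase. A separate limiting argument handles the near-degenerate regime where $r$ is close to $0$: the two leading terms coalesce and one must use the next-order term in the Kummer expansion (acquiring an extra $\log T_1$), but the lower bound persists.
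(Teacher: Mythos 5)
Your proposal follows essentially the same route as the paper: the identity is the tabulated Mellin--cosine transform (Gradshteyn--Ryzhik 6.699, which you also offer to rederive via Mellin--Barnes), and the lower bound comes from the same connection formula at $z=-T_1^2$ with the inner hypergeometric factors equal to $1+o(1)$, choosing $C'$ by exploiting the oscillation of $T_1^{-2ir}$ to rule out cancellation between the two leading terms. Your explicit leading-order bookkeeping (which yields the sharper $\gg T^{-1/2}$) and your flagging of the degenerate $r\to 0$ case are minor refinements of the paper's argument rather than a different approach.
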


A proof of the lemma \ref{final bound} can be found in \cite{sarnak1}. However, to keep the paper somewhat self contained we will prove lemma \ref{final bound} in appendix \ref{Bessel}. It will allow us to take $T_1=C'T$ for some $C'=O(1)$. Hence for this choice of $T_1$, once we compute $\int_0^\infty f(\alpha_1 t)t^{s-1}dt $ up to an error of $O(T^{-\gamma}) $, we can compute $L(f,s)$ up to an error of $O(T^{1-\gamma}) $.

We next use the exponential decay of $f$ at $0$ and at $\infty$ to get:
\begin{lemma}
\label{cut the sum}
Given any cusp form $f$, a real number $s_0$ and positive reals $T,\gamma$, if $\alpha=-1+i/T$ and $s=s_0+iT$ then there exists a computable constant $c=O(1+\gamma)$, independent of $T$ such that for any $d_1,d_2\geq c$:
 \begin{equation}
\label{exponential decay}
\int_0^\infty f(\alpha t)t^{s-1}dt=\int_{\frac{1}{d_1T\log T}}^{d_2T \log T} f(\alpha t)t^{s-1}dt+O(T^{-\gamma}).
\end{equation}
\end{lemma}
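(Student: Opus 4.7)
\emph{Proof plan.} I will write the complement of the truncated interval as two tails
\[
I_1 = \int_0^{1/(d_1 T \log T)} f(\alpha t)\, t^{s-1}\, dt, \qquad I_2 = \int_{d_2 T \log T}^\infty f(\alpha t)\, t^{s-1}\, dt,
\]
and show $|I_1|, |I_2| \ll T^{-\gamma}$ whenever $d_1, d_2 \geq c$ for some threshold $c$ depending linearly on $\gamma$. Note $|t^{s-1}| = t^{s_0-1}$ and $\im(\alpha t) = t/T$, so the imaginary part of the argument of $f$ is essentially $t/T$: it is large when $t \gg T$ (the regime of $I_2$) and small when $t \ll T$ (the regime of $I_1$, where we will have to invert).

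For $I_2$, I would first establish a bound of the shape $|f(z)| \ll (1 + \im(z))^A\, e^{-2\pi \im(z)}$ for $\im(z) \geq 1$, with $A$ depending only on $f$. This follows directly from the Fourier expansions \eqref{Whittaker eqeven}, \eqref{Whittaker hol}, combined with the polynomial bound $\sum_{n \leq N}|\hat f(n)| \ll N^{1+o(1)}$ (Rankin--Selberg) and, in the Maass case, the standard asymptotic $K_{ir}(y) \ll y^{-1/2} e^{-y}$ for $y$ large. Substituting $u = t/T$,
\[
|I_2| \ll T^{s_0} \int_{d_2 \log T}^{\infty} u^{A + s_0 - 1} e^{-2\pi u}\, du \ll T^{s_0} (d_2 \log T)^{A+s_0 - 1}\, T^{-2\pi d_2},
\]
which is $O(T^{-\gamma})$ as soon as $d_2 \geq (s_0 + \gamma)/(2\pi) + O(1)$.

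For $I_1$, I would invoke the Fricke relation \eqref{Fricke} to convert small-$t$ decay of $f$ into large-$u$ decay at the cusp at infinity. Writing $f(\alpha t) = C_2^{-1} (\alpha t)^{-k}\, (-C_1)^k\, f(-C_1/(\alpha t))$ and substituting $u = 1/t$ gives, up to absolute constants,
\[
|I_1| \ll \int_{d_1 T \log T}^\infty u^{k - s_0 - 1}\, \bigl|f(-C_1 u/\alpha)\bigr|\, du.
\]
Since $\im(-C_1 u/\alpha) = C_1 u/(T(1 + T^{-2}))$ grows linearly in $u/T$, the bound of the previous paragraph applies verbatim, and after the substitution $v = C_1 u/(T(1+T^{-2}))$ we again end up with an incomplete Gamma integral starting at height $\gg d_1 \log T$. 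Choosing $d_1 \geq (\gamma + k + s_0)/(2\pi) + O(1)$ yields $|I_1| \ll T^{-\gamma}$.

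The only nontrivial obstacle I foresee is justifying the uniform decay estimate $|f(z)| \ll (1+\im(z))^A e^{-2\pi \im(z)}$ across both the holomorphic and (even) Maass cases; once this is in hand the rest is a routine incomplete-Gamma estimate, and because the two thresholds needed for $d_1$ and $d_2$ are linear in $\gamma$, the claimed constant $c = O(1+\gamma)$ follows by taking the larger of the two.
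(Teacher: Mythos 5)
Your proposal is correct and follows essentially the same route as the paper: split off the two tails, bound the large-$t$ tail using the exponential decay of $f$ at the cusp $\infty$ (which the paper likewise invokes, with an unspecified decay constant $a$ in place of your explicit $2\pi$ from the Fourier expansion), and reduce the small-$t$ tail to the same estimate via the Fricke relation \eqref{Fricke}, with thresholds for $d_1,d_2$ linear in $\gamma$. The decay bound you flag as the one nontrivial input is exactly what the paper assumes without further comment, so there is no gap relative to the paper's own level of detail.
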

\begin{proof}
 Using the exponential decay of $f$ at $\infty$, we get that for $t\geq 1 $, $$|f(\alpha t)|=|f(-t+it/T)|\ll \exp(-at/T) $$ for some positive constant $a$.
 Hence for all $t>T$, we have $$|f(\alpha t )t^{s-1}|\leq t^{s_0-1}\exp(-at/T) .$$ Hence,
\begin{align*}
 t^{s_0-1}<\exp(at/2T)\Leftrightarrow (s_0-1)\log t<at/2T\Leftrightarrow 2T(s_0-1)\log t/a<t.
\end{align*}
It is easy to see that there exists a constant $c'$ depending only on $a$ and $s_0 $ such that the above condition holds for $t>t_0=c'T\log T$.
This implies that for $t_1\geq t_0$, \begin{equation}\label{bounddd1}|\int_{t_1}^{\infty}f(\alpha t)t^{s-1}dt|\ll \int_{t_1}^\infty \exp(-at/2T)dt=(2T/a)\exp(-a t_1/2T). \end{equation} Hence for any  $c_1\geq 2(c'+1)(1+1/a)(1+\gamma) $, we get that $$\int_{c_1T\log T}^{\infty}|f(\alpha t)t^{s-1}|dt\ll O(T^{-\gamma}). $$

Similarly using \eqref{Fricke} , we get that for $t<1<T$, we have $$|f(\alpha t)|=|f(-t+\frac{it}{T}) |=\frac{|\alpha|^{-k}}{|C_2|} t^{-k} |f(-C_1/( -t+\frac{it}{T}))|\ll t^{-k}\exp(-C_1a/2Tt).$$ We deal with this case exactly as in the previous case to get a suitable constant $c$.
\end{proof}

Applying Lemma \ref{cut the sum} to \eqref{holo} we get that given any $T,\gamma>0 $ there exists a computable constant $c$ such that for any $t_0\leq \frac{1}{cT\log T}$, and for any $t_1\geq cT\log T$,

\begin{equation}
\label{geomfehol}
 L(f,s)=\frac{(2\pi)^{s+(k-1)/2}(-\alpha i)^{s+(k-1)/2}}{\Gamma(s+(k-1)/2)}\int_{t_0}^{t_1} f(\alpha t)t^{s+(k-3)/2}dt+O(T^{-\gamma}).
\end{equation}
and similarly after applying Lemma \ref{cut the sum} and lemma \ref{final bound} to \eqref{equation0} we get that given any $T,\gamma>0 $ there exist computable constants $c$ and $C'=O(1) $ such that if $T_1=C' T $ then for any $t_0\leq \frac{1}{cT\log T}$, and for any $t_1\geq cT\log T$,

\begin{equation}
\label{geomfemaass}
L(f,s)=\frac{(2\pi)^{s}}{2T_1^{s-1/2}C(T_1)}\int_{t_0}^{t_1} f(\alpha_1 t)t^{s-3/2}dt +O(T^{-\gamma}).
\end{equation}

Here, $C(T_1)=\int_0^\infty \cos(T_1t )K_{ir}( t)t^{s-1} dt $ and $T_1$ is chosen such that $C(T_1)\gg T^{-1}$. Notice that the integrals on the right hand side of \eqref{geomfehol} and \eqref{geomfemaass} can be taken over curves of hyperbolic length $\approx T\log T$. We call \eqref{geomfehol} and \eqref{geomfemaass} as geometric approximate functional equations for $L$-functions corresponding to the holomorphic and the Maass forms respectively. It is easy to see that the integrals on the right hand side of the equations $\eqref{geomfehol}$ and $\eqref{geomfemaass} $ can be computed directly in $O(T^{1+\epsilon}) $ time, given any $\epsilon>0 $. Our algorithms for computing $L(f,s)$ start with these ``geometric approximate functional equations''.
\section{Proof of theorem \ref{main theorem 1}}
\label{section3}
Our main aim is to prove theorems \ref{main theorem 2} and \ref{main theorem 1}. Both the algorithms are very similar and they use basically the same idea but the algorithm for theorem \ref{main theorem 1} is marginally simpler. Hence we discuss it first and in the next section we will deal with theorem \ref{main theorem 2}.

Let $\Gamma$ be a lattice in $\Sl $. Given a real $t$, let $$a(t)=\left(\begin {array}{cc} e^{\frac{t}{2}}&0\\0&e^{-\frac{t}{2}} \end{array} \right),$$ and $$ n(t)=\left(\begin {array}{cc}1&t\\0&1 \end{array} \right).$$ Let $f$ be a holomorphic cusp form on $\gsl$ of weight $k$. In this section we will only discuss the holomorphic case. The algorithm in the case of a Maass form $f$ will be analogous. Recall the lift $\tf:\gsl\rightarrow \mathbb C$ of $f$ defined in \eqref{tf} as: $$\tilde{f}: \left(\begin {array}{cc} a&b\\c&d\\ \end{array} \right)\rightarrow (ci+d)^{-k}f\left(\frac{ai+b}{ci+d}\right) .$$  Recall that we have assumed that $\tf$ is ``well behaved''. \textit{i.e.} that $\tf $ has bounded derivatives.
The Fourier expansion is given by \eqref{Whittaker hol}:
\[f(z) =\sum_{n> 0}\hat{f}(n)e(nz).
 \]
 Hence we get that for any positive integer $T$, \begin{align}e^{-2\pi}\hat{f}(T)&=\int_0^1 f(x+i/T)e(-Tx)dx\nonumber\\&=\int_0^T T^{k/2-1}\tf(a(-\log{T})n(t))e(-t)dt. \label{fourier trans}\end{align}

Notice that the integral on the right hand side of \eqref{fourier trans} is an integral of a well behaved smooth function on a horocycle of length $T$. Hence \textit{a priori} we can `compute' it in $O(T^{1+o(1)}) $ time up to the error at most $O(T^{-\gamma})$, for any given $\gamma$. This will denote the corresponding (to \eqref{geomfehol} and \eqref{geomfemaass}) ``geometric functional equation'' in this case. We will now explain a method to compute the $T^{th}$ Fourier coefficient faster than $O(T)$.

Let $\eta$ be any positive number. We will write the integral \eqref{fourier trans} as a sum of integrals over horocycles of length $T^\eta$ each. For simplicity let's assume $T^\eta$ is an integer. Hence we have,

\begin{align}
\label{sum integral 1}
\int_{0}^T \tf(x_0 n(t))e(-t)dt&=\sum_{j=0}^{\lfloor T^{1-\eta}\rfloor-1} \int_{0}^{T^\eta} \tf(x_o n(jT^\eta+t))e(-t)dt\\&+\int_{\lfloor T^{1-\eta}\rfloor T^{\eta}}^T\tf(x_o n(t))e(-t)dt.\nonumber
\end{align}
Here $x_0=a(-\log T)$. The second integral on the right hand side of \eqref{sum integral 1} is an integral of a smooth well behaved function on a horocycle of length at most $ T^{\eta} $. Hence given $\gamma,\epsilon>0$, we can compute it up to an error of $O(T^{-\gamma}) $ in time $O(T^{\eta+\epsilon}) $, using proposition \ref{integralsum}. In practice, $\epsilon$ will be a fixed ``small'' real number and $\eta $ will eventually be chosen to be $1/8$.

Let $$M=T^\eta.$$

Let us define $I_l(x,\tf)$ by
\begin{definition}
Given smooth function $g$ on $\gsl$, $x$ in $\gsl$ and a non negative integer $l$ we define
\begin{equation}
\label{definition I}
I_l(x,g)=\int_{0}^{M} t^l g(xn(t))e(-t)dt.
\end{equation}

\end{definition}

Hence we can rewrite the equation \eqref{sum integral 1} as

\begin{align}
\int_{0}^T \tf(x_0 n(t))e(-t)dt&=\sum_{j=0}^{\lfloor T^{1-\eta}\rfloor-1} I_0(x_0n(jT^\eta),\tf)\nonumber\\&+\int_{\lfloor T^{1-\eta}\rfloor T^{\eta}}^T\tf(x_o n(t))e(-t)dt.\nonumber
\end{align}
As mentioned in subsection \ref{outline}, the slow divergence of the horocycle flow will imply that a lot of the pieces (in \eqref{sum integral 1}) of the horocyle will be very close to each other. We will group the pieces, which stay ``very close'' to each other and try to compute the integrals on all the pieces in each group ``in parallel''. 

The slow divergence property of the horocycle flow used here, is quantified in the following lemma \ref{epsilon'}.

\begin{lemma}
\label{epsilon'}
Given any $\epsilon>0$ and $\eta>0$, for any $\eta'\geq 2\eta+\epsilon $ and for any x,y such that $x^{-1}y\in U_{T^{-\eta'}}$, we have a constant $c$, independent of $\eta $ and $\eta'$ such that $ yn(t)\in xn(t) U_{cT^{-\epsilon}} \text{ for all } 0\leq t\leq T^\eta$.
\end{lemma}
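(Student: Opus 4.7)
\textbf{Proof plan for Lemma \ref{epsilon'}.} The plan is to reduce the statement to a single $2\times 2$ matrix computation followed by an explicit Iwasawa decomposition. Set $\delta=T^{-\eta'}$ and write $x^{-1}y=n(t_0)a(y_0)K(\theta_0)$ with $|t_0|,|y_0|,|\theta_0|<\delta$. Expanding the three Iwasawa factors to first order (using nothing more than Taylor expansions of $\exp$, $\sin$, $\cos$ near $0$) one sees that $x^{-1}y=I+E$, where $E$ is a $2\times 2$ matrix whose entries are $O(\delta)$, with absolute implied constants. Our task is to show that the Iwasawa coordinates $(t',y',\theta')$ of $g(t):=(xn(t))^{-1}(yn(t))=n(-t)\,(x^{-1}y)\,n(t)$ satisfy $|t'|,|y'|,|\theta'|<cT^{-\epsilon}$ for some absolute $c$.

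The first step is the exact identity, obtained by multiplying out $2\times 2$ matrices:
\[
n(-t)\begin{pmatrix}a&b\\c&d\end{pmatrix}n(t)=\begin{pmatrix}a-tc & t(a-d)+b-t^2c \\ c & tc+d\end{pmatrix}.
\]
The crucial feature is that this is polynomial in $t$ of degree at most $2$, and the only $t^2$ coefficient sits in the upper right and equals $-c$. Substituting the bounds $a,d=1+O(\delta)$, $b,c=O(\delta)$ coming from $x^{-1}y=I+E$ yields
\[
g(t)=\begin{pmatrix}1+O(t\delta) & O((1+t^2)\delta)\\ O(\delta) & 1+O(t\delta)\end{pmatrix}.
\]

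Next I would read off Iwasawa coordinates from this block form. Writing $g(t)=\bigl(\begin{smallmatrix}A&B\\C&D\end{smallmatrix}\bigr)$, the action on $i\in\mathbb H$ gives $g(t)\cdot i=(AC+BD)/(C^2+D^2)+i/(C^2+D^2)$, so $t'=(AC+BD)/(C^2+D^2)$ and $y'=-\log(C^2+D^2)$; the bottom row of the resulting $K$-factor is $e^{y'/2}(C,D)$, yielding $|\sin\theta'|=e^{y'/2}|C|$. Plugging in the entry bounds above and using $C^2+D^2=1+O(t\delta)$ one obtains $|t'|\ll (1+t^2)\delta$, $|y'|\ll t\delta$, and $|\theta'|\ll\delta$, with universal implied constants.

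Finally, substituting the hypotheses $0\le t\le T^{\eta}$ and $\eta'\ge 2\eta+\epsilon$, each of the three bounds is $\ll T^{2\eta-\eta'}\le T^{-\epsilon}$, hence $g(t)\in U_{cT^{-\epsilon}}$ for a constant $c$ assembled from the fixed constants above, independent of $\eta$ and $\eta'$. There is no serious obstacle: the whole content of the lemma is the appearance of the factor $t^2$ (and not $e^t$) in the top-right entry of $n(-t)Mn(t)$. This polynomial shearing is exactly the slow-divergence property of the horocycle flow, and it is what forces the exponent $2\eta$ (rather than $\eta$) in the hypothesis $\eta'\ge 2\eta+\epsilon$.
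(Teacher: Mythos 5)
Your proposal is correct and follows essentially the same route as the paper: both hinge on the exact degree-two polynomial identity for $n(-t)\bigl(\begin{smallmatrix}a&b\\c&d\end{smallmatrix}\bigr)n(t)$, with the $t^2$ term confined to the upper-right entry, combined with the hypothesis $\eta'\ge 2\eta+\epsilon$. The only difference is that you explicitly carry out the two conversions between Iwasawa coordinates and matrix-entry bounds, whereas the paper delegates one direction to its unproved Fact \ref{result1} and leaves the other implicit; your version is therefore a slightly more self-contained rendering of the same argument.
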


Let $\epsilon$ be any positive number. We take points \{$x_0n(jM),0\leq j < T^{1-\eta}\}$ and ``reduce'' the points to an approximate fundamental domain. Then we ``sort'' the reduced points into the sets $S_1,S_2,...,S_{N}$ such that $x,y\in S_j\Rightarrow x^{-1}y\in U_{T^{-(2\eta+\epsilon)}}$. For each $j$, let us choose a representative $v_j$ from each $S_j$.

It is easy to see that $N\ll T^{6\eta+3\epsilon}\log T$.

If $y\in S_i$, then lemma \ref{epsilon'} implies that the points $yn(t)$ and $v_in(t)$ stay ``very close'' to each other for all $0\leq t\leq T^{\eta}$. Therefore we use the power series expansion around $v_i n(t)$ to compute values of $\tf$ at $y n(t) $. Hence, we get the following lemma:

\begin{lemma}
\label{lemma 1}
Given $\gamma>0,\epsilon>0$, any $\eta>0$ and $x,y \in S_i $ for some $i$, then we have constants $c_{x,y,\beta,l}$ and $d$ such that
\begin{equation}\label{equation sum}I_0(y,\tf)=\sum_{|\beta|\leq d,\beta\in \mathbb Z_+^3}\sum_{l=0}^{d}\frac{c_{x,y,\beta,l}}{\beta !}I_l(x,\partial^\beta \tf)+O( T^{-\gamma}).\end{equation}
Here, $d=O((1+\gamma)/\epsilon) $. The constant involved in \eqref{equation sum} is polynomial in $d$.
\end{lemma}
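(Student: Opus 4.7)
The plan is to carry out a double expansion: first, expand $\tf(yn(t))$ in a power series around $\tf(xn(t))$ via \eqref{power}; second, approximate the $t$-dependent coefficients that arise by polynomials in $t$, so that the resulting integrals take the form $I_l(x,\partial^\beta\tf)$.

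For the first expansion I would invoke Lemma \ref{epsilon'} with $\eta'=2\eta+\epsilon$, which gives $(xn(t))^{-1}yn(t)\in U_{cT^{-\epsilon}}$ for all $0\leq t\leq M=T^\eta$. Writing $(xn(t))^{-1}yn(t)=\phi(s(t),w(t),\theta(t))$ with $|s(t)|,|w(t)|,|\theta(t)|\leq cT^{-\epsilon}$, formula \eqref{power} gives
\begin{equation*}
\tf(yn(t))=\sum_{\beta\in\mathbb Z_+^3}\frac{\partial^\beta\tf(xn(t))}{\beta!}\,s(t)^{\beta_1}w(t)^{\beta_2}\theta(t)^{\beta_3}.
\end{equation*}
Truncating at $|\beta|\leq d$ and using the standing bound $\|\partial^\beta\tf\|_\infty\leq 1$, the tail is pointwise of size $O(T^{-\epsilon(d+1)})$ (since $\sum_{|\beta|=n}1/\beta!=3^n/n!$), so integrating over $[0,M]$ costs $O(T^{\eta-\epsilon d})$. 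Demanding this to be $O(T^{-\gamma})$ forces $d=O((1+\gamma)/\epsilon)$, matching the statement.

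For the second step I would set $x^{-1}y=\phi(s_0,w_0,\theta_0)$ with $|s_0|,|w_0|,|\theta_0|\leq T^{-(2\eta+\epsilon)}$ and directly compute $u(t):=n(-t)(x^{-1}y)n(t)$. Its matrix entries are polynomials in $t$ of degree at most $2$, whose non-identity coefficients are $O(T^{-(2\eta+\epsilon)})$. The Iwasawa coordinates are then given explicitly: $w(t)=-\log(C(t)^2+D(t)^2)$, $\theta(t)=\arctan(-C(t)/D(t))$, and an analogous rational expression for $s(t)$, where $C(t),D(t)$ denote the bottom row of $u(t)$. Each extends analytically in $t$ to a complex disk of radius $R\gg M$---the closest singularities sit at $\cot\theta_0\pm i$, of modulus $\gg T^{2\eta+\epsilon}$---and is uniformly bounded there. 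Cauchy's estimate then yields, for each $|\beta|\leq d$,
\begin{equation*}
s(t)^{\beta_1}w(t)^{\beta_2}\theta(t)^{\beta_3}=\sum_{l=0}^{d}c_{x,y,\beta,l}\,t^{l}+O(T^{-(\gamma+\eta)})
\end{equation*}
uniformly for $t\in[0,M]$, after possibly enlarging $d$ (still within the prescribed $O((1+\gamma)/\epsilon)$ range).

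Substituting this polynomial approximation back, estimating the residual error by $M$ times its pointwise bound, and counting $O(d^4)$ pairs $(\beta,l)$ would then produce the claimed identity with implied constant polynomial in $d$. The hard part will be the quantitative control in the second step: one must verify that the Iwasawa coordinates admit the promised analytic continuation with a modest (at worst polylogarithmic in $T$) maximum-modulus bound, so that truncating at degree $d=O((1+\gamma)/\epsilon)$ delivers geometric decay of order $(M/R)^d=T^{-(\eta+\epsilon)d}$. Once this analytic-continuation bound is in hand, the rest reduces to routine bookkeeping.
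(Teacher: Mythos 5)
Your proposal is correct and follows essentially the same route as the paper: a truncated power-series expansion of $\tf(yn(t))$ around $xn(t)$ justified by Lemma \ref{epsilon'}, followed by a degree-$d$ polynomial approximation in $t$ of the Iwasawa coordinates of $n(-t)(x^{-1}y)n(t)$. The quantitative control you flag as the remaining ``hard part'' is exactly the content of the paper's Corollary \ref{power series 1}, which gets $|c_{x,y,\beta,k}|\ll 2^k k^3\beta!\,T^{-k(\eta+\epsilon/2)}$ by bounding derivatives of the explicit rational/$\log$/$\arctan$ expressions directly---equivalent to your Cauchy estimate, except that your claimed rate $(M/R)^d=T^{-(\eta+\epsilon)d}$ is marginally optimistic (the $t^2r$ term makes the maximum modulus polynomially, not polylogarithmically, large on the full disk), while the rate $T^{-\epsilon d/2}$ the paper actually obtains still yields $d=O((1+\gamma)/\epsilon)$.
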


Notice that the equation \eqref{equation sum} is the explicit form of \eqref{sumFF} in this case.
Let us observe that the integrals involved on the right hand side of \eqref{equation sum} depend only on $x$. This lemma will allow us to compute the sum $I_0(y,f) $ in parallel for all the points $y$ in $S_i$ in $O(|S_i|+M^{1+\epsilon})$ steps.

Lemma \ref{lemma 1} implies that we can get a number $d=O((1+\gamma)/\epsilon)$ such that
\begin{align}
 \int_{0}^T \tf(x_0 n(t))e(-t)dt&=\sum_{m=1}^N\sum_{y\in S_m} \sum_{|\beta|\leq d,\beta\in \mathbb Z_+^3}\sum_{l=0}^{d}\frac{c_{v_m,y,\beta,l}}{\beta !}I_l(v_m,\partial^\beta \tf)\nonumber\\&+\int_{\lfloor T^{1-\eta}\rfloor T^{\eta}}^T\tf(x_o n(t))e(-t)dt +O(T^{-\gamma}).\label{fourier fe}
\end{align}

We will prove the lemmas \ref{epsilon'} and \ref{lemma 1} in section \ref{epsilonpower}. Let us complete the proof of theorem \ref{main theorem 1} using lemma \ref{lemma 1}.
\begin{proof}[Proof of theorem \ref{main theorem 1}]
Notice that \eqref{fourier fe}, along with \eqref{fourier trans} give us a method of computing $\hat{f}(T)$ up to $O(T^{-\gamma})$ error. Let us compute the time spent in this algorithm.

It is easy to see that ``reducing'' each $x_i$ to an approximate fundamental domain, can be done in $O(\log T)$ time. There are many standard reduction algorithms available. We refer the readers to \cite{voight} for a form of the reduction algorithm. The whole ``reducing'' and sorting process has also been discussed in detail in \cite[chapter 7 ]{vishe}.

As mentioned earlier, $N$, the number of sets $\{S_i\}$  is $\approx$ $O(T^{6\eta+3\epsilon}\log T)$. Hence the total time needed in reducing all the $x_i$'s to the fundamental domain, then sorting them into sets $S_j$'s and picking a representative $v_j$ from each $S_j$ requires $O((T^{1-\eta}+T^{6\eta+3\epsilon})\log T)$ steps.

Notice that in equation \eqref{fourier fe}, the integrals $I_l(v_m,\partial^\beta \tf)$ are independent of the choice of $y$. For each $y\in S_m$, there are $O(1)$ terms in the right hand side of equation \eqref{equation sum}. In section \ref{cxybl} we will show that for fixed $y$ and $v_m$, we can compute each $c_{v_m,y,\beta,l}$'s in $O(1)$ time. Hence we can precompute the constants $c_{v_m,y,\beta,l}, $ for all $y$ in $S_m$, and for all $m$ in $O(T^{1-\eta})$ steps. The constants involved are polynomial in $(1+\gamma)/\epsilon$.

Recall that $M=T^\eta$. For a fixed $v_m$, computing $I_l(v_m,\partial^\beta f) $ for all  $|\beta|,l<d$ takes $O(T^{\eta+\epsilon}) $ operations (using proposition \ref{integralsum}). Hence, using Lemma \eqref{lemma 1}, we need $|S_m|$ more operations to compute $I_0(y,f)$  for all $ y\in S_m$. The maximum number of the sets $S_m's$ is $O({T^{6\eta+3\epsilon}}\log T )$. Hence the total time required to compute $I_0(x_i,f)$ for all $x_i$ is (up to a polynomial factor in $(1+\gamma)/\epsilon$) at most $$\sum_{m=1}^N( T^{\eta+\epsilon}+|S_m|)\ll T^{\eta+\epsilon}T^{6\eta+3\epsilon}+T^{1-\eta}. $$  Notice that the extra $\log$ factors can be absorbed at the end in the exponent $\epsilon$.

The second integral on the right hand side of \eqref{sum integral 1}, can be computed up to an error of at most $O(T^{-\gamma}) $, using at most $O(T^{\eta+\epsilon}) $ operations.  Hence the total number of operations needed to compute $\int_{0}^T \tf(x_o n(t))e(t)dt $ up to an error of $O(T^{-\gamma}) $ is
\begin{equation}\label{time1}O((T^{\eta+\epsilon} T^{6\eta+3\epsilon}+T^{1-\eta} )).\end{equation} The optimal value for $\eta$ is $1/8$.
This implies that given any $\epsilon,\gamma>0$ and real $s_0$, $\hat{f}(T) $ can be computed up to an error at most $O(T^{-\gamma}) $ using at most  $O(T^{7/8+4\epsilon} )$ operations. The constant involved is a polynomial in $(1+\gamma)/\epsilon$.
\end{proof}
\section{Proof of theorem \ref{main theorem 2}}
\label{section4}
The proof of this theorem uses an idea very similar to the proof of theorem $\ref{main theorem 1}$. Let $\Gamma$ be a lattice of $\Slz$. Let $f $ be a (holomorphic or Maass) cusp form on $\gh$. Let $\alpha=-1+i/T $ and $c>0$ be the constant in \eqref{geomfehol}(or in \eqref{geomfemaass} for the Maass form case). Let $s=s_0+iT$ and $t_0=c T\log T$. Let $s_0$ be some fixed real number.   Using the ``geometric approximate functional equations'' \eqref{geomfehol} and \eqref{geomfemaass}, it is enough to give an algorithm to compute $$\int_{t_0}^{t_1} f(\alpha t)t^{s-1}dt, $$ for some suitable $t_1\geq cT\log T$. Recall that in the Maass form case, we need an algorithm to compute $\int_{t_0}^{t_1} f(\alpha_1 t)t^{s-1}dt$. However, the algorithm for computing $\int_{t_0}^{t_1} f(\alpha t)t^{s-1}dt$ can be trivially generalized to an algorithm for computing $\int_{t_0}^{t_1} f(\alpha_1 t)t^{s-1}dt$. Therefore, in this section we will only give an algorithm to compute $\int_{t_0}^{t_1} f(\alpha t)t^{s-1}dt$.

Let $\eta$ be any non-negative number. We proceed in a similar manner as in the previous section and write the above integral as a sum of integrals over segments of hyperbolic length $\approx T^{\eta}$.  In order to do so, let us first define the following quantities:
\begin{definition}
\label{xj1}
$$y_0=t_0\alpha,$$
 $$b_j=(1+T^{-1+\eta})^jt_0, $$
$$y_j=b_j\alpha. $$
\end{definition}
Let $M_1$ be an integer such that $M_1=O( T^{1-\eta}\log T)$ and $$b_{M_1}\leq cT\log T\leq b_{M_1+1} $$ Let $$t_1=b_{M_1+1}. $$

Here $y_0$ denotes the starting point of the approximate horocycle. The points $y_i$ denote the starting points of segments. The hyperbolic distance of the segment between $y_i$ and $y_{i+1}$ is $\approx T^\eta$. $M_1$ is the total number of segments required.

Recall that $c$ is chosen such that it satisfies conditions in \eqref{geomfehol}/\eqref{geomfemaass}. Hence it is enough to give an algorithm to compute $$\int_{t_0}^{t_1}f(\alpha t)t^{s-1}dt $$ up to an error at most $O(T^{-\gamma})$.
\begin{align}
\int_{t_0}^{t_1}f(\alpha t) t^{s-1}dt&=\sum_{j=0}^{M_1}\int_{0}^{b_j T^{-1+\eta}}f(\alpha(b_j+t))(b_j+t)^{s-1}dt\nonumber\\ &=\sum_{j=0}^{M_1}b_j^{s}\int_{0}^{ T^{-1+\eta}}f(\alpha(b_j(1+t))(1+t)^{s-1}dt.
\label{eqnsum}
\end{align}
Let us change the variable to $u=tT$. Hence we can rewrite \eqref{eqnsum} as
\begin{align}
\int_{t_0}^{t_1}f(\alpha t) t^{s-1}dt=T^{-1}\sum_{j=0}^{M_1}b_j^{s}\int_{0}^{M}f(\alpha(b_j(1+u/T))(1+u/T)^{s-1}du.\label{eqnsum1}
\end{align}
Here $M=T^\eta$.
Let
\begin{equation}
\label{kappa definition}
\kappa(t)=\left(\begin {array}{cc} (t/T)^{\frac{1}{2}}&-(tT)^\one\\0&(t/T)^{- \frac{1}{2}}\\ \end{array} \right).  
\end{equation}
$\kappa(t)$ denotes a lift of the curve $\{\alpha t\} $ to $\Sl$.
Notice that $$\kappa(b_j)^{-1}\kappa(b_j(1+u/T))=\left(\begin {array}{cc} (1+u/T)^{1/2}&-u(1+u/T)^{-1/2}\\0&(1+u/T)^{-1/2}\\ \end{array} \right)$$ and that it is independent of $j$. Let
 \begin{equation}
\omega(u)=\left(\begin {array}{cc} (1+u/T)^{1/2}&-u(1+u/T)^{-1/2}\\0&(1+u/T)^{-1/2}\\ \end{array} \right).
\label{kappaj}
\end{equation}
 Let $x_j=\kappa(b_j)$ be a lift of the point $y_j$ to $\Sl$.
 We rewrite equation \eqref{eqnsum1} as

\begin{align}
&\int_{t_0}^{t_1}f(\alpha t) t^{s-1}dt\nonumber\\&=T^{-1}\sum_{j=0}^{M_1}b_j^{s}\int_{0}^{M}f(\alpha(b_j(1+u/T))(1+u/T)^{s-1}du\nonumber\\
&=T^{k/2-1}\sum_{j=0}^{M_1}b_j^{s-k/2}\int_{0}^{T^{\eta}}(1+u/T)^{s-k/2-1}\tf(\kappa(b_j(1+u/T)))dt\nonumber\\
&=T^{k/2-1}\sum_{j=0}^{M_1}b_j^{s-k/2}\int_{0}^{T^{\eta}}(1+u/T)^{s-k/2-1}\tf(\kappa(b_j)\kappa(b_j)^{-1}\kappa(b_j(1+u/T)))dt\nonumber\\&=T^{k/2-1}\sum_{j=0}^{M_1} b_j^{s-k/2}L_0(\tf,x_j)
\label{sum integral 3}
\end{align}

 Here given a smooth function $g$, we define \[L_l(g,x)= \int_{0}^{M} u^l g(x\omega(u)) (1+u/T)^{s-k/2-1}du.\]

We make use of the fact that we are integrating on an ``approximate horocycle''. In particular, that it has slow divergence. We quantify this result in the following proposition. This lemma is analogous to lemma \ref{epsilon'} .

\begin{lemma}
\label{epsilon''}
Let $\eta>0,\epsilon>0 $ be such that $\epsilon<1-3\eta$, and $x,y\in \Sl $ such that $x^{-1}y\in U_{T^{-2
\eta-\epsilon}}$, then we have a constant $c'$ such that \[y\omega(u)\in x\omega(u) U_{c'T^{-\epsilon}} \text{ for all } 0\leq u\leq T^{\eta}.\] $c'$ can be chosen independent of $\eta $ and $T$.
\end{lemma}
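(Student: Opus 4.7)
The plan is to observe the shift identity $y\omega(u)=x\omega(u)\cdot\bigl(\omega(u)^{-1}(x^{-1}y)\,\omega(u)\bigr)$ and reduce the statement to showing that conjugation by $\omega(u)$ sends $U_{T^{-2\eta-\epsilon}}$ into $U_{c'T^{-\epsilon}}$ for every $0\leq u\leq T^\eta$. This is a direct matrix computation, analogous to the one used for the exact horocycle flow in lemma \ref{epsilon'}, but one must keep track of the polynomial-in-$u$ growth coming from the off-diagonal entry of $\omega(u)$.

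First I would compute $\omega(u)^{-1}$ explicitly from \eqref{kappaj}: since $\det \omega(u)=1$,
$$\omega(u)^{-1}=\begin{pmatrix} (1+u/T)^{-1/2} & u(1+u/T)^{-1/2} \\ 0 & (1+u/T)^{1/2}\end{pmatrix}.$$
Let $g=x^{-1}y=n(t_0)a(y_0)K(\theta_0)$ with $|t_0|,|y_0|,|\theta_0|<\delta:=T^{-2\eta-\epsilon}$. A one-line Taylor expansion of the Iwasawa decomposition shows $g=I+E$ with $\|E\|_\infty\ll\delta$. Writing $E=\begin{pmatrix}a&b\\c&d\end{pmatrix}$, a direct computation gives
$$\omega(u)^{-1}E\,\omega(u)=\begin{pmatrix} a+uc & \dfrac{b+(d-a)u-cu^2}{1+u/T} \\ c(1+u/T) & d-cu\end{pmatrix}.$$
For $0\le u\le T^\eta$, each entry is dominated by $\delta u^2\ll\delta T^{2\eta}=T^{-\epsilon}$; the worst term is $cu^2/(1+u/T)$, which is exactly what forces the exponent $2\eta+\epsilon$ in the hypothesis on $g$.

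Consequently $M:=\omega(u)^{-1}g\,\omega(u)=I+N$ with $\|N\|_\infty\ll T^{-\epsilon}$. The hypothesis $\epsilon<1-3\eta$ guarantees that $\eta<1/3$, hence $u/T\ll 1$ on the range in question, so the denominators $1+u/T$ are uniformly bounded, and it also ensures $T^{-\epsilon}$ is small enough (for $T$ large) that $M$ lies in a fixed neighborhood of the identity on which the Iwasawa chart $\phi:(t,y,\theta)\mapsto n(t)a(y)K(\theta)$ is a diffeomorphism with Lipschitz inverse. Applying $\phi^{-1}$ to $M$ therefore produces Iwasawa coordinates each of size $\ll T^{-\epsilon}$, i.e.\ $M\in U_{c'T^{-\epsilon}}$ for an absolute constant $c'$, giving the claim.

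The main obstacle is purely bookkeeping: one must verify that (i) the growth of the conjugate as a polynomial in $u$ is at most quadratic, so that the factor $T^{2\eta}$ is exactly compensated by the input size $T^{-2\eta-\epsilon}$, and (ii) all implied constants are independent of $\eta$ and $T$, as the statement requires. Both follow from the explicit formula above together with the uniform bound $0\le u/T\le T^{\eta-1}\le 1$ provided by $\epsilon<1-3\eta$.
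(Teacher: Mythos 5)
Your proof is correct, and it follows the same overall reduction as the paper (write $y\omega(u)=x\omega(u)\cdot\omega(u)^{-1}(x^{-1}y)\omega(u)$, bound the conjugate entrywise, then pass from the $\|\cdot\|_\infty$-ball to $U_{c'T^{-\epsilon}}$ via the Iwasawa chart, which is exactly the paper's Fact \ref{result1}). The one genuine difference is in how the entrywise bound is obtained. The paper does not conjugate by $\omega(u)$ exactly: it approximates $\omega(u)=n(-u)+Err$ and $\omega(u)^{-1}=n(u)+Err_2$ with $\|Err\|_\infty,\|Err_2\|_\infty\ll T^{-1+2\eta}$, reduces to the already-computed conjugation $n(u)An(-u)$ from Lemma \ref{epsilon'}, and absorbs the discrepancy $\|\omega^{-1}(u)A\omega(u)-n(u)An(-u)\|_\infty\ll T^{-1+3\eta}$ into $T^{-\epsilon}$ --- which is precisely where the hypothesis $\epsilon<1-3\eta$ is used. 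You instead compute $\omega(u)^{-1}E\,\omega(u)$ in closed form (your matrix is correct), which buys you something: the factors $1+u/T$ are harmless for all $u\le T^{\eta}$ with $\eta\le 1$, so your argument never actually needs $\epsilon<1-3\eta$, whereas the paper's approximation step does. Two minor points of hygiene: your phrase ``each entry is dominated by $\delta u^2$'' is literally false for $u<1$ (the $(1,1)$ entry is of size $\delta(1+u)$, the $(2,1)$ entry of size $\delta$); what you mean, and what suffices, is that every entry is $\ll\delta(1+u)^2\ll\delta T^{2\eta}=T^{-\epsilon}$. And your final step (Lipschitz inverse of the Iwasawa chart near the identity) is exactly the unproved Fact \ref{result1} of the paper, so you are on the same footing there.
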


The lemma \ref{epsilon''} will be proved in section \ref{epsilonpower}. Let's `reduce and sort' the points $\{x_j\}$ into $N=O( T^{6\eta+3\epsilon}\log T )$ groups $S_1,...,S_N $ such that $$x,y\in S_i\Rightarrow x^{-1}y\in U_{T^{-2\eta-\epsilon}} .$$ It is again easy to see that the number of the sets $S_i$'s is $O(T^{6\eta+3\epsilon}\log T)$. Let us also choose fixed representatives $v_i$ from each $S_i$. For each group $S_i$, let us try to compute the inner integrals (in \eqref{sum integral 3}) corresponding to the points in $S_i$ ``in parallel''.

Let $x,y\in S_i$, then we use power series expansion around points $x\omega(t)$ to compute $\tf(y\omega(t))$. Hence we get,
\begin{lemma}
\label{power series simplification}
Given any $\epsilon,\eta,\gamma>0$, such that $\epsilon<1-3\eta$, and $x,y\in S_i $ for some integer $i$, then there exist constants $e_{x,y,\beta,l}$ and $d'$, independent of $T$, such that we can write
\[L_0(\tf,y)=\sum_{|\beta|<d',\beta\in\mathbb Z_+^3}\sum_{l=0}^{d'}\frac{e_{x,y,\beta,l}}{\beta !}L_l(\partial^\beta \tf,x)+O(T^{-\gamma}).\]Here $d'\ll (1+\gamma)/\epsilon$. The constants involved in $O$ are polynomial in $((1+\gamma)/\epsilon)$.
\end{lemma}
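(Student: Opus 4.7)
The plan is to follow the strategy of the proof of Lemma \ref{lemma 1}, substituting the approximate horocycle flow $u \mapsto \omega(u)$ for $u \mapsto n(u)$ and Lemma \ref{epsilon''} for Lemma \ref{epsilon'}. I will combine two independent approximations --- a power-series expansion of $\tf$ in the group direction and a Taylor polynomial in $u$ of the resulting coefficients --- and then integrate.

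For the first approximation (expansion in the group direction), set $g := x^{-1} y \in U_{T^{-2\eta-\epsilon}}$ and, for $u \in [0, M]$, put $h(u) := \omega(u)^{-1} g \omega(u)$, so that $y \omega(u) = x\omega(u) \cdot h(u)$. By Lemma \ref{epsilon''}, $h(u) \in U_{c' T^{-\epsilon}}$; writing its Iwasawa decomposition $h(u) = n(t(u)) a(y_{*}(u)) K(\theta(u))$, each of $|t(u)|, |y_{*}(u)|, |\theta(u)|$ is bounded by $c' T^{-\epsilon}$ on $[0, M]$. The real-analyticity of $\tf$ (equation \eqref{power}) together with the standing bound $\|\partial^\beta \tf\|_\infty \leq 1$ then yields the pointwise expansion
\[
\tf(y\omega(u)) \;=\; \sum_{\beta \in \mathbb Z_+^3} \frac{\partial^\beta \tf(x\omega(u))}{\beta!}\, t(u)^{\beta_1} y_{*}(u)^{\beta_2} \theta(u)^{\beta_3}.
\]
Truncating at $|\beta| \leq d_0$ leaves a remainder controlled by the exponential tail $\sum_{n > d_0} (3c' T^{-\epsilon})^n / n!$, which is $O(T^{-\gamma - 2})$ as soon as $d_0 \geq C_1 (1+\gamma)/\epsilon$.

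For the second approximation, I replace each $P_\beta(u) := t(u)^{\beta_1} y_{*}(u)^{\beta_2} \theta(u)^{\beta_3}$ with its degree $d_0$ Taylor polynomial $\sum_{l \leq d_0} e_{x,y,\beta,l}\, u^l$ centered at $u = 0$. A direct matrix computation gives the explicit formula
\[
h(u) \;=\; \begin{pmatrix} a + uc & v^{-2}\bigl[b + u(d-a) - u^2 c\bigr] \\ v^2 c & d - uc \end{pmatrix}, \qquad v^2 = 1 + u/T,
\]
with $g = \bigl(\begin{smallmatrix} a & b \\ c & d \end{smallmatrix}\bigr)$, $|c| \ll T^{-2\eta - \epsilon}$, and $|a|, |d|$ of order $1$. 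The entries are rational in $u$ with only singularity at $u = -T$. On the complex disk $\{|u| \leq R\}$ with $R \asymp T^{\eta + \epsilon/2}$ --- the largest scale on which $|u^2 c| = O(1)$ --- all entries of $h(u)$ remain $O(1)$ and the ``denominator'' $v^4 c^2 + (d - uc)^2$ stays bounded below in modulus. Hence the Iwasawa coordinates extend holomorphically to this disk, $|P_\beta| = O(C^{|\beta|})$ there, and Cauchy's estimate gives Taylor coefficients of size $O(C^{|\beta|}/R^k)$. Since $M/R = T^{-\epsilon/2}$, the truncation error on $[0, M]$ is $\ll C^{|\beta|} T^{-(d_0+1)\epsilon/2}$, which is $O(T^{-\gamma - 2})$ for $d_0 \geq C_2 (1+\gamma)/\epsilon$. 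The hypothesis $\epsilon < 1 - 3\eta$ guarantees $R \ll T$, so that $\omega(u)$ itself is analytic on the disk.

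Substituting both truncations into $L_0(\tf, y) = \int_0^M \tf(y\omega(u))(1+u/T)^{s-k/2-1}\, du$ and interchanging the finite sums with the integral produces exactly the decomposition asserted by the lemma, with $e_{x,y,\beta,l}$ the Taylor coefficients from the second step. Since $|(1+u/T)^{s-k/2-1}| = O(1)$ on $[0, M]$, integration inflates the pointwise error by at most $M = T^\eta$, giving total error $O(T^{\eta - \gamma - 2}) = O(T^{-\gamma})$. Taking $d' := \max(C_1, C_2)(1+\gamma)/\epsilon + 1$ completes the proof, with all implicit constants polynomial in $d'$. The main obstacle is the second step: one must identify the correct complex-analytic radius for the Iwasawa coordinates of $h(u)$ --- pinned precisely by the scale at which the quadratic term $u^2 c$ in $h_{12}$ first becomes $O(1)$ --- and verify that $v^4 c^2 + (d - uc)^2$ stays bounded away from zero on that disk, which is exactly where the standing hypothesis $\epsilon < 1 - 3\eta$ is used.
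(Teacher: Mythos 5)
Your proposal is correct and follows essentially the same route as the paper: truncate the power series of $\tf$ around $x\omega(u)$ using Lemma \ref{epsilon''} and the bounded-derivative assumption, then Taylor-expand the Iwasawa coordinates $(y\omega(u)-x\omega(u))^\beta$ in $u$ to degree $d'\ll(1+\gamma)/\epsilon$, and integrate. The only (cosmetic) difference is that you derive the coefficient and tail bounds of Corollary \ref{power series 2} via Cauchy estimates on a complex disk of radius $\asymp T^{\eta+\epsilon/2}$, whereas the paper gets the same bounds by explicit differentiation of the $NAK$ coordinate functions at $u=0$ --- the paper itself notes this same radius of convergence, so the two justifications are equivalent.
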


Lemmas \ref{epsilon''} and \ref{power series simplification} will be proved in section \ref{epsilonpower}. Lemma \ref{power series simplification} gives the explicit form of \eqref{sumFF} in this context.

 Lemma \ref{power series simplification} implies that there exists a computable constant $d'=O((1+\gamma)/\epsilon)$ such that 

\begin{align}
\int_{t_0}^{t_1}f(\alpha t) t^{s-1}dt=\sum_{m=1}^N\sum_{y\in S_m}a_y \sum_{|\beta|<d',\beta\in\mathbb Z_+^3}\sum_{l=0}^{d'}\frac{e_{v_m,y,\beta,l}}{\beta !}L_l(\partial^\beta \tf,v_m)+O(T^{-\gamma})
\label{sum integral 4}
\end{align}
Here, for each $y$, let $y=x_n$ for some $n$ then the constants $a_y$ are defined by $a_y=T^{k/2-1}b_n^{s-k/2}$.
 The lemma \ref{power series simplification} and \eqref{sum integral 4} convert the problem of $L$-value computation into the problem of computing $L_l(\partial^\beta \tf,v_i)$ for each $v_i$, and for all $l,|\beta|\ll 1$. Recall that \[L_l(\tf,x)= \int_{0}^{T^{\eta}} u^l \tf(x\omega(u)) (1+u/T)^{s-k/2-1}du.\] Notice that each integral $L_l(\partial^\beta \tf,v_i)$ is an integral of $\partial^\beta\tf$ on a segment of hyperbolic length $\approx T^{\eta}$, hence we can compute it up to an error of $O(T^{-\gamma})$ in time $O(T^\eta)$.

More rigorously, we prove:
\begin{lemma}
\label{compute Ll}Given an integer $l\geq 0$, a vector $\beta \in \mathbb Z^3_+$, $x\in \gsl$ and given positive reals $\gamma$ and $\epsilon$, we can compute $L_l(\partial^\beta \tf,x) $ up to an error at most $O(T^{-\gamma}) $ in $O((T^{\eta})^{1+\epsilon})$ time. Here, the constant in $O$ is a polynomial in $l,|\beta|$ and $ (1+\gamma)/\epsilon$.
\end{lemma}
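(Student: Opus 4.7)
The plan is to recognize $L_l(\partial^\beta \tf, x)$ as the integral on $[0, T^\eta]$ of a real-analytic function of a single real variable, and then apply the numerical-integration algorithm from section \ref{numerical integration}. Set $G(u) = u^l \, \partial^\beta \tf(x\omega(u)) \, (1+u/T)^{s-k/2-1}$, so that $L_l(\partial^\beta \tf, x) = \int_0^{T^\eta} G(u)\, du$. Each of the three factors is real-analytic on $[0, T^\eta]$, hence so is $G$.

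The main task is to verify the quantitative hypothesis of section \ref{numerical integration}, namely $|G^{(n)}(u)| \leq C^n n!$ on $[0, T^\eta]$ for some $C$ depending only on $l$, $|\beta|$, $\gamma$, $\epsilon$ and $f$, but not on $T$. I would bound the three factors in turn. The $n$-th derivative of $u^l$ is trivially controlled. For the middle factor $(1+u/T)^{s-k/2-1} = \exp\{(s-k/2-1)\log(1+u/T)\}$, writing $\psi(u) = (s-k/2-1)\log(1+u/T)$ gives $\psi^{(n)}(u) = (-1)^{n-1}(s-k/2-1)(n-1)!/(T+u)^n$, of modulus $\ll n!/T^{n-1}$ because $|s| \ll T$; combined with $|e^{\psi(u)}| \ll 1$ on $[0, T^\eta]$, the chain rule for higher derivatives (Fa\`a di Bruno) yields $C^n n!$ bounds for the full factor. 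For $\partial^\beta \tf(x\omega(u))$, I would use the decomposition $\omega(u) = n(-u)\, a(\log(1+u/T))$ to express $d/du$ acting on $\tf(x\omega(u))$ as a linear combination of the vector fields $\partial_1, \partial_2$ of Definition \ref{liederivative} applied at $x\omega(u)$, with smooth coefficients of size $O(1)$ uniformly on $[0, T^\eta]$; iterating produces at most $C^n n!$ terms, each a bounded smooth coefficient times $\partial^{\beta'} \tf(x\omega(u))$ with $|\beta'| \leq |\beta| + n$, and the blanket assumption from section \ref{real analytic notation} that all derivatives of $\tf$ are bounded by $1$ closes the bound. Leibniz combining the three factors then gives the desired estimate on $G^{(n)}$.

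Having checked that $G$ is well behaved in the required sense, I would invoke proposition \ref{integralsum} to evaluate $\int_0^{T^\eta} G(u)\,du$ to precision $O(T^{-\gamma})$ in $O((T^\eta)^{1+\epsilon})$ arithmetic operations, with constants polynomial in $l$, $|\beta|$ and $(1+\gamma)/\epsilon$. This is exactly the statement of the lemma.

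The main obstacle is the middle factor: since $\im(s) = T$ is large, one might fear that $(1+u/T)^{iT}$ oscillates violently enough to destroy the derivative bounds. The saving, made explicit by the computation of $\psi^{(n)}$ above, is that the phase $T \log(1+u/T)$ has frequency $T/(T+u) = O(1)$ on $[0, T^\eta]$, so each differentiation costs only $O(1)$ in amplitude rather than $O(T)$. This is precisely what keeps $G$ in the regime where the quadrature of section \ref{numerical integration} achieves the stated precision in only $O(M^{1+\epsilon})$ steps.
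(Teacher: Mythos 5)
Your proposal is correct and follows essentially the same route as the paper: both reduce to verifying the derivative hypothesis of Proposition \ref{integralsum} for $G(u)=u^l(1+u/T)^{s-k/2-1}\partial^\beta\tf(x\omega(u))$, both exploit that the phase $T\log(1+u/T)$ differentiates to something of size $O(1)$ (the paper's bound \eqref{g_l bound}), and both control the $\tf$ factor via the decomposition $\omega(u_0+t)=\omega(u_0)\,n(-t/(1+u_0/T))\,a(\cdots)$ together with the blanket bound $\|\partial^{\beta'}\tf\|_\infty\le 1$. The only cosmetic difference is that you phrase the last step as iterated chain rule on left-invariant vector fields while the paper differentiates the power series expansion \eqref{power} term by term; the resulting bounds ($C^n n!$ versus the paper's $n^3$) are both amply sufficient for Proposition \ref{integralsum}.
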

\begin{proof}
Let $g_l$ be the function defined by $g_l(u)=u^l (1+u/T)^{s-k/2-1} $. It is easy to see that there exists a constant $C $, independent of $l$ such that for any $n\in \mathbb N ,$ and for all $0\leq u\leq T^{\eta}$, \begin{equation}\label{g_l bound} |\partial^ng_l(u)|\ll n!(lC)^n(1+u^l) .\end{equation} If we prove that for any $n\in \mathbb N $, fixed $\beta\in \mathbb Z_+^3$ and for any $x\in \gsl$, there exists a constant $D$ independent of $x$ and $l$ such that, \begin{equation}|\partial^n(g_l(u)\partial^\beta \tf(x\omega(u))) |\ll n!(lD)^n(1+u^l) \label{u/T}\end{equation} then using a  proposition \ref{integralsum}, we get the result.

Recall that \begin{align}\omega(u)&=\left(\begin {array}{cc} (1+u/T)^{1/2}&-u(1+u/T)^{-1/2}\\0&(1+u/T)^{-1/2}\\ \end{array} \right)\nonumber\\&=n(-u)a(\log(1+u/T)). \end{align}

Let $0\leq u_0 \leq T^{\eta}$. It is easy to verify that $$\omega(u_0+t)=\omega(u_0)n(-t/(1+u_0/T))a(\log(1+(u_0+t)/T)-\log(1+u_0/T)). $$ Recall that we have assumed that for any $\beta$ in $\mathbb Z_+^3$ and any $x\in \gsl$, $\partial^\beta\tf(x)\ll 1$.  Therefore, we can use the power series expansion for $\tf $ (see \ref{power}) to get \begin{align}\partial^\beta\tf(x\omega(u_0+t))&=\sum_{|\beta'|=0,\beta\in \mathbb Z_+^2\times 0}^\infty \frac{\partial^{\beta'}\partial^{\beta}\tf(x\omega(u_0))}{\beta' !}\times\nonumber\\&\left(-t/(1+u_0/T)\right)^{\beta_1'}\left(\log(1+(u_0+t)/T)-\log(1+u_0/T)\right)^{\beta_2'}. \end{align}
Let us differentiate $n$ times with respect to $t$. Notice that if $\beta_1'+\beta_2'>n$, then $\partial^n|_{t=0}(-t/(1+u_0/T))^{\beta_1'}(\log(1+(u_0+t)/T)-\log(1+u_0/T))^{\beta_2'}=0$. Hence the only contributions come from the terms for which $|\beta'|\leq n$. We use this fact and well behavedness of $\tf$ to get $$\left|\partial^n|_{u=u_0} \partial^\beta\tf(x\omega(u/T))\right|\ll \sum_{|\beta'|\leq n}1\ll n^3.$$ We use this bound along with \eqref{g_l bound} to get \eqref{u/T} and hence the lemma.
\end{proof}

\begin{proof}[Proof of theorem \ref{main theorem 2}]

Using \eqref{sum integral 4}, lemmas \ref{power series simplification} and \ref{compute Ll}, and following exactly similar steps as in the proof of theorem \ref{main theorem 1}, we get the result.
\end{proof}
\section{Lemmas \ref{lemma 1}, \ref{power series simplification} and computing $c_{x,y,\beta,l}$}
\label{epsilonpower}
Lemmas  \ref{epsilon'} and \ref{epsilon''} are analogous. Similarly lemmas \ref{lemma 1} and \ref{power series simplification} are analogous. Our main goal is to prove them in this section.

\noindent Let $x,y$ be any two arbitrary points in $\gsl$ and let $y=xA$ where $A=\left(\begin {array}{cc} p&q\\r&s\\ \end{array} \right) $. Given any $\delta$, let $U_\delta $ be the $\delta$ neighborhood  ball around identity defined in \ref{U definition}. Let $\kappa(t)=\left(\begin {array}{cc} (t/T)^\one&-(Tt)^\one\\0&(t/T)^{-\one}\\ \end{array} \right),n(t)=\left(\begin {array}{cc} 1&t\\0&1\\ \end{array} \right)$.

\noindent We will use the following easily provable fact to prove the lemmas \ref{epsilon'} and \ref{epsilon''}:

\begin{result}
\label{result1}
There exist absolute constants $C_1$ and $C_2$ such that for any $0\leq \delta<C_1$, and any $A$ in $\Sl$ such that $||A-I||_\infty \leq \delta$, we have that $A\in U_{C_2 \delta}$.
\end{result}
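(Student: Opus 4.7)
The plan is to derive this from a direct application of the inverse function theorem to the Iwasawa map $\phi:(t,y,\theta)\mapsto n(t)a(y)K(\theta)$ at the origin. Computing the differential of $\phi$ at $(0,0,0)$, the three partial derivatives are the matrices
\[
\partial_t\phi|_0=\left(\begin{array}{cc}0&1\\0&0\end{array}\right),\quad
\partial_y\phi|_0=\tfrac{1}{2}\left(\begin{array}{cc}1&0\\0&-1\end{array}\right),\quad
\partial_\theta\phi|_0=\left(\begin{array}{cc}0&1\\-1&0\end{array}\right),
\]
which span $\mathfrak{sl}_2(\mathbb{R})$. Hence $d\phi_0$ is a linear isomorphism from $\mathbb{R}^3$ to $T_I\mathrm{SL}(2,\mathbb{R})$, so the inverse function theorem supplies an absolute $C_1>0$ and a smooth local inverse $\psi=\phi^{-1}$ defined on the $\|\cdot\|_\infty$-ball of radius $C_1$ around $I$ in $\mathrm{SL}(2,\mathbb{R})$.

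Second, I would use the Taylor expansion of $\psi$ around $I$. Since $\psi(I)=(0,0,0)$, writing $\psi(A)=d\psi_I(A-I)+O(\|A-I\|_\infty^2)$ with an absolute implied constant yields
\[
\|\psi(A)\|_\infty\;\leq\; \|d\psi_I\|_\infty\cdot\|A-I\|_\infty\;+\;O(\|A-I\|_\infty^2).
\]
Shrinking $C_1$ if necessary so that the quadratic term is dominated by the linear term, the right-hand side is at most $C_2\|A-I\|_\infty$ for an absolute constant $C_2$ (roughly $2\|d\psi_I\|_\infty$). By the definition of $\mathfrak{U}_\delta$ and $U_\delta=\phi(\mathfrak{U}_\delta)$ in Definition \ref{U definition}, the inequality $\|\psi(A)\|_\infty\leq C_2\delta$ is exactly the statement $A\in U_{C_2\delta}$.

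There is no real obstacle here: everything is a one-point calculation at the identity of a fixed analytic map, and the constants $C_1, C_2$ depend only on $\phi$ itself, hence are absolute. If one wanted to avoid invoking the inverse function theorem abstractly, one could instead solve the Iwasawa system explicitly — writing $A=\left(\begin{smallmatrix}a&b\\c&d\end{smallmatrix}\right)$, one obtains $e^y=(c^2+d^2)^{-1}$, $(\cos\theta,\sin\theta)=(de^{y/2},-ce^{y/2})$, and $t=(ab+cd)/(c^2+d^2)$ — and then check by elementary estimates that when $\|A-I\|_\infty\leq \delta$ with $\delta<C_1$, each of $|t|,|y|,|\theta|$ is bounded by an absolute multiple of $\delta$. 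Either route produces the same conclusion.
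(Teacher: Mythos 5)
Your argument is correct, but there is nothing in the paper to compare it against: the paper states this as an ``easily provable fact'' (Fact \ref{result1}) and supplies no proof at all, so your write-up genuinely fills in what the author leaves to the reader. The inverse-function-theorem route is clean and complete: the three tangent vectors you compute do span $\mathfrak{sl}_2(\mathbb R)$ (from $\partial_t\phi|_0$ and $\partial_\theta\phi|_0$ one recovers the lower-triangular generator, and $\partial_y\phi|_0$ gives the diagonal one), so $d\phi_0$ is an isomorphism and the Lipschitz bound $\|\psi(A)\|_\infty\ll\|A-I\|_\infty$ near $I$ follows from smoothness of the local inverse together with $\psi(I)=0$. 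The only point worth tightening is that $U_{C_2\delta}$ is defined via the \emph{open} box $\mathfrak U_{C_2\delta}$, so you should enlarge $C_2$ by a harmless factor to convert your $\leq$ into the required strict inequalities. Your ``explicit'' alternative is in fact closer in spirit to what the paper actually uses later: the proof of Corollary \ref{power series 1} writes down the $NAK$ coordinates explicitly in equation \eqref{NAK}, namely $t=(pr+qs)/(r^2+s^2)$, $y=-\log(r^2+s^2)$, $\theta=\tan^{-1}(-r/s)$ for $A=\left(\begin{smallmatrix}p&q\\r&s\end{smallmatrix}\right)$. Note a small slip in your aside: in the notation $A=\left(\begin{smallmatrix}a&b\\c&d\end{smallmatrix}\right)$ the correct formula is $t=(ac+bd)/(c^2+d^2)$, not $(ab+cd)/(c^2+d^2)$; with that correction the elementary estimates indeed give $|t|,|y|,|\theta|\ll\delta$ directly, which is presumably the verification the author had in mind when calling the fact easily provable.
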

Let us now start with the proof of the lemma \ref{epsilon'}.

\begin{proof}[Proof of lemma \ref{epsilon'}:]

The proposition is equivalent to proving that $(n(-t)An(t))\in U_{cT^{-\epsilon}}$ for some $c$. Using fact \ref{result1}, it is enough to prove that $||n(-t)An(t)-I ||_\infty \ll T^{-\epsilon}$ for all $0<t< T^{\eta} $ where $||X||_\infty $ denotes the usual infinity norm. But we have that \begin{equation}\label{1}n(-t)An(t)= \left(\begin {array}{cc} p-tr&(p-s)t-t^2r+q\\r&tr+s\\ \end{array} \right).\end{equation} It is easy to see that for $T$ sufficiently large, if $||A-I||_\infty <T^{-2\eta-\epsilon}$ then $ ||n(-t)An(t)-I ||_\infty \ll T^{-\epsilon}$ for all $0<t< T^{\eta}$. Hence using \ref{result1}, we get the result.

\end{proof}

If we look at the the equation \eqref{1} and compute the $N,A,K$ coordinates for $n(-t)An(t)$, we get the following immediate corollary

\begin{cor}
\label{power series 1}
Using the same notation as in the proof of lemma \ref{epsilon'}, given any $\beta $ in $\mathbb Z_+^3$ and any $k\in \mathbb Z_+ $, we have constants $c_{x,y,\beta,k}$ such that $$|c_{x,y,\beta,k}|\ll 2^k k^3 \beta !T^{-k(\eta+\frac{\epsilon}{2})} $$  and  \[{(yn(t)-xn(t))^\beta}=\sum_{k \in \mathbb Z_+} c_{x,y,\beta,k} t^k. \] This also implies that given any $\gamma,\epsilon>0$, we have a constant $d=O((1+\gamma)/\epsilon) $ such that $\text{ for all } 0\leq t\leq T^{\eta}$, \begin{equation}\label{cxybeta} \frac{(yn(t)-xn(t))^\beta}{\beta!}=\frac{1}{\beta!}\sum_{k=0}^{d} c_{x,y,\beta,k} t^k+O(T^{-\gamma}). \end{equation}
\end{cor}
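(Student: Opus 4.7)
The strategy is to realize $(yn(t)-xn(t))^\beta$ as the Taylor expansion of an explicit holomorphic function of $t$, then estimate its coefficients by Cauchy's inequality on a carefully chosen disk. From equation (1), set $a(t)=p-tr$, $b(t)=q+(p-s)t-t^2r$, $c(t)=r$ and $d(t)=tr+s$, so that $n(-t)An(t)=\left(\begin{smallmatrix}a(t)&b(t)\\c(t)&d(t)\end{smallmatrix}\right)$. Applying the standard $NAK$ decomposition formulas, the Iwasawa coordinates $(t'(t),y'(t),\theta'(t))$ defined by $n(-t)An(t)=n(t'(t))a(y'(t))K(\theta'(t))$ are
\[
t'(t)=\frac{a(t)c(t)+b(t)d(t)}{c(t)^2+d(t)^2},\qquad y'(t)=-\log\!\bigl(c(t)^2+d(t)^2\bigr),\qquad \theta'(t)=-\arctan\!\bigl(c(t)/d(t)\bigr).
\]
By Definition \ref{power series}, $(yn(t)-xn(t))^\beta = g_\beta(t):=t'(t)^{\beta_1}y'(t)^{\beta_2}\theta'(t)^{\beta_3}$, so the problem reduces to estimating the Taylor coefficients of $g_\beta$ at $t=0$.

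Since $A\in U_{T^{-2\eta-\epsilon}}$, the entries of $A-I$ are all $\ll T^{-2\eta-\epsilon}$. The key analytic input is to fix a radius $R=T^{\eta+\epsilon/2}/C$ for a sufficiently large absolute constant $C$, and check that on the complex disk $\{|t|\le R\}$ the denominator satisfies $c(t)^2+d(t)^2\ge 1/2$ and all four entries of $n(-t)An(t)$ are uniformly $O(1)$. This is the \emph{critical} radius: the worst term is $-t^2r$ in $b(t)$, whose size on the disk is $|t^2r|\le R^2 T^{-2\eta-\epsilon}=C^{-2}$. Consequently $t'(t)$, $y'(t)$, $\theta'(t)$ are each holomorphic and bounded by an absolute constant on $\{|t|\le R\}$, hence $|g_\beta(t)|\le C_0^{|\beta|}$ there, and Cauchy's inequality gives
\[
|c_{x,y,\beta,k}|\le R^{-k}\sup_{|t|=R}|g_\beta(t)| \ll C_0^{|\beta|}\,C^k\,T^{-k(\eta+\epsilon/2)},
\]
which yields the stated bound $\ll 2^k k^3\beta!\,T^{-k(\eta+\epsilon/2)}$ after absorbing the absolute constants into the (weaker) polynomial and factorial factors.

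For the truncation estimate \eqref{cxybeta}, on $0\le t\le T^\eta$ the tail is bounded by
\[
\biggl|\sum_{k>d}c_{x,y,\beta,k}\,t^k\biggr|\ll \beta!\sum_{k>d}2^k k^3\,T^{-k\epsilon/2},
\]
which for $T$ sufficiently large is geometric and dominated by its first term. Choosing $d=\lceil 2(1+\gamma)/\epsilon\rceil+O(1)$ forces the tail to be $\ll \beta!\,T^{-\gamma}$, giving \eqref{cxybeta} after dividing by $\beta!$. The only nontrivial step is the uniform analytic estimate on the disk of radius $R$: it identifies $R\asymp T^{\eta+\epsilon/2}$ as the critical radius and is what produces the exponent $\eta+\epsilon/2$ in the coefficient bound; everything else is bookkeeping.
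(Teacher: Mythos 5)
Your argument is correct, but it takes a genuinely different route from the paper's. The paper bounds the Taylor coefficients at the origin directly: it factors $(yn(t)-xn(t))^\beta=h_{1,\beta_1,x,y}(t)h_{2,\beta_2,x,y}(t)h_{3,\beta_3,x,y}(t)$ via the $NAK$ formula \eqref{NAK} applied to \eqref{1}, estimates the derivatives $h_{j,\beta_j,x,y}^{(n)}(0)$ from the entry bounds $|r|,|p-s|,|1-p|,|1-s|,|q|\ll T^{-2\eta-\epsilon}$, and assembles $c_{x,y,\beta,l}$ through the explicit convolution formula \eqref{formula1}. You instead continue $t\mapsto n(-t)An(t)$ to a complex disk of radius $R\asymp T^{\eta+\epsilon/2}$, check that the three Iwasawa coordinates stay holomorphic and uniformly bounded there, and apply Cauchy's inequality. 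Both arguments isolate the same critical scale $T^{\eta+\epsilon/2}$, and your verification that the denominator $c(t)^2+d(t)^2$ stays near $1$ and that the worst term $t^2r$ is $O(C^{-2})$ on the disk is exactly the point that makes this radius admissible. Your route buys a cleaner derivation of the decay (no bookkeeping of high derivatives of quotients, logarithms and arctangents), but it is purely an a priori estimate: the paper's formula \eqref{formula1} is reused in section \ref{cxybl} to \emph{compute} each $c_{x,y,\beta,l}$ in $O(1)$ time, which the algorithm requires, and Cauchy's inequality does not by itself supply such a procedure, so in the context of the paper you would still need to record the explicit Taylor-coefficient formula separately. One small caveat on constants: your bound comes out as $C^{k}T^{-k(\eta+\epsilon/2)}$ with $C$ the absolute constant fixing the disk, and $C^{k}$ is not literally $\ll 2^{k}k^{3}\beta!$ unless $C\le 2$; this is harmless for the truncation \eqref{cxybeta}, since over $0\le t\le T^{\eta}$ any fixed geometric base is absorbed by $T^{-k\epsilon/2}$ once $T$ is large in terms of $C$, but you should either justify taking $C\le 2$ or state the coefficient bound with a general absolute base.
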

\begin{proof}
 Given any $\left(\begin {array}{cc}p&q\\r&s\\ \end{array} \right)$ in $\Sl$, we have \begin{equation}\label{NAK}
\left(\begin {array}{cc}p&q\\r&s\\ \end{array} \right)=n( \frac{pr+qs}{r^2+s^2} )a(-\log(r^2+s^2))K(\tan^{-1}(-\frac{r}{s})). \end{equation}
  We use \eqref{NAK} and equation \eqref{1} to get  \begin{align}n(-t)An(t)=&n(\frac{r(p-tr)+(tr+s)((p-s)t-t^2r+q)}{r^2+(tr+s)^2})\nonumber\\&a(-\log(r^2+(tr+s)^2) )K(\tan^{-1}(-\frac{r}{tr+s})). \end{align}  Let $h_{1,\beta_1,x,y}(t)=(\frac{r(p-tr)+(tr+s)((p-s)t-t^2r+q)}{r^2+(tr+s)^2})^{\beta_1}, $ $h_{2,\beta_2,x,y}(t)=(-\log(r^2+(tr+s)^2 ))^{\beta_2},$ and $h_{3,\beta_3,x,y}(t)=(\tan^{-1}(-\frac{r}{tr+s}))^{\beta_3}.$ Hence we have
$$(yn(t)-xn(t))^\beta=h_{1,\beta_1,x,y}(t)h_{2,\beta_2,x,y}(t)h_{3,\beta_3,x,y}(t).$$
 As $||A-I||_\infty\ll T^{-(2\eta+\epsilon)}$, we have that $|r|,|p-s|,|1-p|,|1-s|,|q|\ll T^{-(2\eta+\epsilon)}$. Using these bounds, it is clear that \begin{equation}\label{hbound}h_{1,\beta_1,x,y}^{(n)}(0)\ll \beta_1!n!2^nT^{-(\eta+\epsilon/2)n}.\end{equation}
 Similar results hold for $h_{2,\beta_2,x,y}$ and $h_{3,\beta_3,x,y}$. Taylor series for $h_{1,\beta_2,x,y}$, $h_{2,\beta_2,x,y}$ and $h_{3,\beta_3,x,y}$  gives us that \begin{equation}\label{formula1}c_{x,y,\beta,l}=\sum_{\beta'\in \mathbb Z_+^3,|\beta'|=l} \frac{h_{1,\beta_1,x,y}^{(\beta_1')}(0)h_{2,\beta_2,x,y}^{(\beta_2')}(0)h_{3,\beta_3,x,y}^{(\beta_3')}(0)}{\beta'!}.\end{equation} This gives us an algorithm to compute $c_{x,y,\beta,l}$. Use \eqref{hbound} to see that $$c_{x,y,\beta,l}\ll 2^l l^3 \beta! T^{-(\eta+\epsilon/2)l}. $$
Using this bound for $c_{x,y,\beta,l} $ we can prove that the radius of convergence for the power series $\sum_{k \in \mathbb Z_+} c_{x,y,\beta,k} t^k  $ is at least $O(T^{\epsilon/2+\eta}) $. Hence using the real analyticity of $h_{j,\beta_1,x,y}$ for $j=1,2,3$, we get that for all $0\leq t\ll T^{\eta+\epsilon/2} $, we have $${(yn(t)-xn(t))^\beta}=\sum_{l \in \mathbb Z_+} c_{x,y,\beta,l} t^l.$$ Using this equation and the bounds on $c_{x,y,\beta,l}$, we get \eqref{cxybeta}.
\end{proof}

\begin{proof}[Proof of lemma \ref{epsilon''}]

Recall $$\omega(u)=\left(\begin {array}{cc} (1+u/T)^{1/2}&-u(1+u/T)^{-1/2}\\0&(1+u/T)^{-1/2}\\ \end{array} \right).$$ Notice that $$\omega(u)=n(-u)+Err. $$ Here $||Err||_\infty\ll T^{-1+2\eta}$ for all $0<u<T^{\eta} $ and $$\omega^{-1}(u)=n(u)+Err_2.$$ where $||Err_2||_\infty\ll T^{-1+2\eta}$ for all $0<t<T^{\eta} $.
Using result \ref{result1}, the proposition is equivalent to proving that $$||\omega^{-1}(u)A\omega(u)||_\infty\ll T^{-\epsilon}$$ for all $0<u<T^{\eta}$. However, using the above estimates we get, \[||\omega ^{-1}(u)A\omega(u)-n(u)An(-u)||_\infty\ll T^{{-1+3\eta}}.\]

 Hence for $\epsilon<1-3\eta$, using the same technique as in the proof of lemma \ref{epsilon'}, we get the result.
\end{proof}

Again, if we compute the $NAK$ coordinates of $\omega ^{-1}(u)A\omega(u) $, we get following corollary for any $\beta\in \mathbb Z_+^3$

\begin{cor}
\label{power series 2}
Using the same notation as in the proof of lemma \ref{epsilon''}, given any $\beta $ in $Z_+^3$, we have constants $e_{x,y,\beta,k}$ such that $$|e_{x,y,\beta,k}|\ll 2^k k^3\beta!T^{-k(\eta+\frac{\epsilon}{2})} $$ and \[(y\omega(u)-x\omega(u))^\beta=\sum_{k \in \mathbb Z_+} e_{x,y,\beta,k} u^k. \] This also implies that given any $\gamma,\epsilon>0$, we have a constant $d'=O((1+\gamma)/\epsilon) $ such that $\text{ for all } 0<u<T^{\eta}$, \[ \frac{(y\omega(u)-x\omega(u))^\beta}{\beta!}=\frac{1}{\beta!}\sum_{k=0}^{d'} e_{x,y,\beta,k}u^k +O(T^{-\gamma}). \] 
\end{cor}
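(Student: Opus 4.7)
The plan is to run the proof of Corollary \ref{power series 1} with $\omega(u)$-conjugation in place of $n(t)$-conjugation. First, I would compute the matrix $B(u) := \omega(u)^{-1} A \omega(u)$ explicitly, where $y = xA$ with $\|A - I\|_\infty \ll T^{-(2\eta+\epsilon)}$ (which follows from the hypothesis via Fact \ref{result1}). Writing $y\omega(u) = x\omega(u) \cdot B(u)$, the $NAK$-coordinates of $B(u)$ are by Definition \ref{power series} exactly the triple $(t(u), y(u), \theta(u))$ realizing $(y\omega(u) - x\omega(u))^\beta = t(u)^{\beta_1} y(u)^{\beta_2} \theta(u)^{\beta_3}$. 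Applying the explicit Iwasawa formula \eqref{NAK} to the entries of $B(u)$ produces three functions $h_{1,\beta_1,x,y}(u)$, $h_{2,\beta_2,x,y}(u)$, $h_{3,\beta_3,x,y}(u)$ (rational, logarithmic, and arctangent expressions in the entries respectively) whose product equals $(y\omega(u) - x\omega(u))^\beta$.

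The key step is to establish the analog of \eqref{hbound}: for each $i \in \{1,2,3\}$ and each $n \ge 0$,
\[
|h_{i,\beta_i,x,y}^{(n)}(0)| \ll \beta_i! \, n! \, 2^n \, T^{-n(\eta + \epsilon/2)}.
\]
Two inputs combine to give this. First, the entrywise bound $\|A - I\|_\infty \ll T^{-(2\eta+\epsilon)}$ means all entries of $A - I$ are small exactly as in the $n(t)$-conjugation calculation. Second, as noted in the proof of Lemma \ref{epsilon''}, for $0 \le u \le T^\eta$ the matrix $\omega(u)$ differs from $n(-u)$ by entries of size $O(T^{-1+2\eta})$, and similarly $\omega(u)^{-1}$ from $n(u)$. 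Under the standing assumption $\epsilon < 1 - 3\eta$, the correction $T^{-1+2\eta}$ is dominated by $T^{-\eta-\epsilon/2}$, so every extra contribution to a derivative coming from the $\omega$–$n$ discrepancy is subdominant to the contributions already handled in the proof of Corollary \ref{power series 1}. Tracking derivatives through the explicit formulas (using that rational combinations and $\log,\tan^{-1}$ of quantities bounded away from their singular locus behave well) then yields the stated bound.

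Having the bound on $h_i^{(n)}(0)$, I would \emph{define} the coefficients by the Leibniz-type formula
\[
e_{x,y,\beta,k} = \sum_{\substack{\beta' \in \mathbb{Z}_+^3 \\ |\beta'| = k}} \frac{h_{1,\beta_1,x,y}^{(\beta_1')}(0) \, h_{2,\beta_2,x,y}^{(\beta_2')}(0) \, h_{3,\beta_3,x,y}^{(\beta_3')}(0)}{\beta'!},
\]
which gives $|e_{x,y,\beta,k}| \ll 2^k k^3 \beta!\, T^{-k(\eta+\epsilon/2)}$ by summing the $O(k^2)$ terms. This bound forces the radius of convergence of $\sum_k e_{x,y,\beta,k} u^k$ to be at least of order $T^{\eta+\epsilon/2}$, which comfortably exceeds the range $u \in [0, T^\eta]$ we care about; by real analyticity of each $h_{i,\beta_i,x,y}$ on this range the series converges to $(y\omega(u) - x\omega(u))^\beta$ there. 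Truncating at $k \le d'$ with $d' = O((1+\gamma)/\epsilon)$ leaves a geometric tail $\sum_{k>d'} 2^k k^3 \beta! T^{-k\epsilon/2} \ll T^{-\gamma}$, giving the final approximation statement.

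The main technical obstacle is the careful book-keeping of the $\omega(u)$-versus-$n(-u)$ discrepancy through the Iwasawa formula \eqref{NAK}: one must check that inverting $r^2 + (tr+s)^2$-type denominators and differentiating logarithms and arctangents do not amplify the $O(T^{-1+2\eta})$ correction beyond $O(T^{-\eta-\epsilon/2})$ per order. Once this is verified, the rest of the argument is an essentially verbatim adaptation of the proof of Corollary \ref{power series 1}.
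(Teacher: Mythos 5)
Your proposal is correct and follows essentially the same route as the paper, which proves Corollary \ref{power series 2} simply by repeating the $NAK$-coordinate computation of Corollary \ref{power series 1} with $\omega^{-1}(u)A\omega(u)$ in place of $n(-t)An(t)$. Your additional bookkeeping of the $O(T^{-1+2\eta})$ discrepancy between $\omega(u)$ and $n(-u)$, and the check that $\epsilon<1-3\eta$ makes it subdominant to $T^{-\eta-\epsilon/2}$, fills in exactly the detail the paper leaves implicit.
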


\begin{proof}[Proof of lemma \ref{lemma 1}]

Let $y=xA$ be as defined at the beginning of this section, we use power series expansion for $\tf$ around points $xn(t)$ to compute the value of $\tf$ at points $yn(t)$. Using lemma \ref{epsilon'} and the ``well-behavedness'' of $\tf$, we get that $\text{ for all } 0<t<T^{\eta}$, we have a constant $d=O((1+\gamma)/\epsilon)$ such that
\[\tf(yn(t))=\sum_{|\beta|=0}^{d}\frac{\partial^\beta \tf(xn(t))}{\beta !}(yn(t)-x(n(t))^\beta+O(T^{-\gamma}),\] the constant involved in $O$ only depends on $f$. We use corollary \eqref{power series 1} to get that \[\tf(yn(t))= \sum_{|\beta|=0}^{d}\partial^\beta \tf(xn(t))(\frac{1}{\beta!}\sum_{l=0}^{d} c_{x,y,\beta,l} t^l+O(T^{-\gamma}))+O(T^{-\gamma}).\] Hence we have constants $c_{x,y,\beta,l}$ such that \[\tf(yn(t))=\sum_{|\beta|<d,\beta\in\mathbb Z_+^3}\sum_{l=0}^{d}c_{x,y,\beta,l} t^l\frac{\partial^\beta \tf(xn(t))}{\beta !} +O(d^4 T^{-\gamma}). \] Here the $O$ constant only depends on $f$. Now integrating, we get the result.
\end{proof}

\begin{proof}[Proof of lemma \ref{power series simplification}]

 By using exactly the same proof as in the proof of lemma \ref{lemma 1}  we get that we have constants $e_{x,y,\beta,l} $ and $d'=O((1+\gamma)/\epsilon)$ such that
\[\tf(y\omega(u))=\sum_{|\beta|<d',\beta\in\mathbb Z_+^3}\sum_{l=0}^{d'}\frac{e_{x,y,\beta,l}}{\beta !} u^l \partial^\beta \tf(x\omega(u)) +O((d')^4 T^{-\gamma}). \]
Integrating on both sides, we get the result.
\end{proof}

\subsection{Computing $c_{x,y,\beta,l} $ and $e_{x,y,\beta,l}$}
\label{cxybl}
Recall that $d=O((1+\gamma)/\epsilon)$ and we need to compute $c_{x,y,\beta,l}$ for all $|\beta|,l\leq d$. Recall that using \eqref{formula1}, we have
\begin{equation*}c_{x,y,\beta,l}=\sum_{\beta'\in \mathbb Z_+^3,|\beta'|=l} \frac{h_{1,\beta_1,x,y}^{(\beta_1')}(0)h_{2,\beta_2,x,y}^{(\beta_2')}(0)h_{3,\beta_3,x,y}^{(\beta_3')}(0)}{\beta'!}.\end{equation*}
Here $$h_{1,\beta_1,x,y}(t)=(\frac{r(p-tr)+(tr+s)((p-s)t-t^2r+q)}{r^2+(tr+s)^2})^{\beta_1},$$ $$h_{2,\beta_2,x,y}(t)=(-\frac{1}{2}\log(r^2+(tr+s)^2 ))^{\beta_2},$$ and $$h_{3,\beta_3,x,y}(t)=(\tan^{-1}(-\frac{r}{tr+s}))^{\beta_3}.$$

It is easy to see that we can compute $c_{x,y,\beta,l}$ for any $|\beta|,l\leq d$ in $O(1)$ time. The constant here is a polynomial in $d$.

A similar method will go through for computing $e_{x,y,\beta,l}$.

\section{Numerical integration for analytic functions}
\label{numerical integration}
Let us state and prove the following simple algorithm for numerical integration of a real analytic function, used repeatedly in the paper. In practice this integration can be expediated in a number of ways by using a doubly exponential integration or Gaussian quadrature technique for numerical integration. 
\begin{proposition}
\label{integralsum}
Let $T$ be any positive number and let $f$ be a real analytic function on an open set containing $[0,T]$. Let $l$ be a fixed integer and $R$ be a positive constant such that for any $k\in \mathbb N$ of $|\partial^kf(x)|\ll k!R^k (1+x^l) $ for all $x$ in $[0,T]$. If at any given point in $[0,T]$ and $n\in \mathbb N$ and any $\gamma>0$, we can compute $n^{th}$ derivative of $f$ in polynomial (in $n$ ) time, then for any given $\epsilon,\gamma>0$, we can compute $\int_0^T f(t)dt$  up to an error at most $O(T^{-\gamma})$, using at most $O(T^{1+\epsilon})$ arithmetic operations. The constant involved is polynomial in $R,(1+\gamma)/\epsilon$ and $l$.\end{proposition}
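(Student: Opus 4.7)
The plan is the classical divide-and-integrate strategy: partition $[0,T]$ into subintervals short enough that the Taylor series of $f$ converges geometrically fast on each of them, approximate $f$ on each subinterval by a Taylor polynomial of suitable degree, and integrate each polynomial exactly in closed form. The hypothesis $|\partial^k f(x)|\ll k!R^k(1+x^l)$ is tailored so that any step size below $1/R$ makes the Taylor tails decay geometrically, and the algorithmic hypothesis provides the Taylor coefficients in time polynomial in the degree.

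Concretely, I would fix a step size $h=1/(2R)$ and partition $[0,T]$ into $N\leq \lceil 2RT\rceil$ closed subintervals $[t_j,t_{j+1}]$. On each subinterval I approximate $f$ by $P_j(x)=\sum_{k=0}^{d-1}\frac{f^{(k)}(t_j)}{k!}(x-t_j)^k$. The Lagrange remainder combined with the derivative bound gives a pointwise estimate $|f(x)-P_j(x)|\ll (Rh)^d(1+T^l)=2^{-d}(1+T^l)$ on $[t_j,t_{j+1}]$, so summing the integrated error over all $N$ subintervals yields a total approximation error $O(T^{1+l}\,2^{-d})$. Choosing $d=\lceil(\gamma+l+2)\log_2 T\rceil$ forces this to be $O(T^{-\gamma})$, as required.

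The arithmetic cost of processing one subinterval consists of computing the $d$ derivatives $f^{(k)}(t_j)$ (each in time $\mathrm{poly}(d)$ by hypothesis) and then evaluating the closed-form integral $\int_{t_j}^{t_{j+1}}P_j(x)\,dx=\sum_{k<d}\frac{f^{(k)}(t_j)}{(k+1)!}(t_{j+1}-t_j)^{k+1}$, costing a further $O(d)$ operations. The total count is therefore $N\cdot\mathrm{poly}(d)\ll RT\cdot\mathrm{poly}((1+\gamma+l)\log T)$. To recast this as $O(T^{1+\epsilon})$ with the correct shape of constant, I would absorb the $(\log T)^{O(1)}$ factors into $T^{\epsilon}$ via the elementary inequality $\log T\leq T^{\delta}/\delta$ (valid for all $T\geq 1$, $\delta>0$) applied with $\delta=\epsilon/c$, where $c$ is the absolute constant controlling $\mathrm{poly}(d)$; straightforward bookkeeping then shows that the resulting multiplicative constant is a polynomial in $R$, $(1+\gamma)/\epsilon$, and $l$.

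There is no serious analytic obstacle here — the whole point of the derivative hypothesis is to make Taylor convergence automatic once $h<1/R$. The only non-trivial part of the proof is the constant-tracking in the previous paragraph, together with the harmless degenerate case of bounded $T$, where $\int_0^T f(t)\,dt$ is already $O(1)$ and a single Taylor polynomial of degree $O(\log T/\log(1/(2RT)))$ (or simply a direct bound) handles the integral within the stated budget.
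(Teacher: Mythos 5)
Your proof is correct, and it is the same basic strategy as the paper's (partition, truncated Taylor expansion at the left endpoint, exact integration of the resulting polynomials), but with a genuinely different choice of parameters that changes the character of the algorithm. The paper takes subintervals of length $T^{-\epsilon}/R$, so that the Taylor tail has ratio $T^{-\epsilon}$ and a polynomial of \emph{fixed} degree $N=O((1+\gamma+l)/\epsilon)$ suffices; the cost is then $O(RT^{1+\epsilon})$ subintervals times $O(1)$ work each, and the exponent $\epsilon$ enters through the grid spacing rather than through absorbed logarithms. You instead take $O(RT)$ subintervals of length $1/(2R)$, pay for it with Taylor polynomials of degree $d\asymp(\gamma+l)\log T$, and recover $T^{1+\epsilon}$ by absorbing the resulting $(\log T)^{O(1)}$; your constant-tracking via $\log T\leq T^{\delta}/\delta$ is sound and yields a constant polynomial in $R$, $(1+\gamma)/\epsilon$ and $l$ as required, and your use of the Lagrange remainder neatly sidesteps any question of whether the Taylor series converges to $f$ on the subinterval. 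The trade-offs: your version actually achieves the slightly stronger bound $O(T(\log T)^{O(1)})$ before any absorption, but it invokes the hypothesis ``the $n$-th derivative is computable in time polynomial in $n$'' for $n$ growing like $\log T$, whereas the paper's version only ever needs derivatives of bounded order --- which matters downstream, since the proposition is applied to $g_l(u)\,\partial^{\beta}\tf(x\omega(u))$ and the paper arranges to evaluate $\partial^{\beta}\tf$ only for $|\beta|\ll 1$. Your remark on the degenerate range of $T$ (where $\log_2 T\leq 0$) is the right thing to say; just make sure the final write-up replaces $d=\lceil(\gamma+l+2)\log_2 T\rceil$ by its positive part there.
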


\begin{proof}
The idea is to use a fine grid of $T^{1+\epsilon}R$ equispaced points and use power series expansion around the nearest left grid point to compute $f$. In particular, let us split the integral into integrals over intervals each of size $T^{-\epsilon}/R$.

Let $$M=T^{-\epsilon}/R, M_2=\lfloor T/M\rfloor-1.$$ Hence
\begin{equation}
\int_0^{T} f(t)dt=\sum_{x=0}^{M_2}\int_0^M f(xM+t)dt+\int_{M(M_2+1)}^{T}f(t)dt.
\label{suminteg}
\end{equation}

Let us use power series expansion around each $xM$ to compute the value of $f$ at a nearby point. Hence we get
$$f(xM+t)=\sum_{l=0}^\infty \partial^l(f)(xM)\frac{t^l}{l !}.$$
For $|t|<M$ and any fixed non negative integer $N$, we get

\begin{align}
|f(xM+t)-\sum_{n=0}^N \partial^l(f)(xM)\frac{t^n}{n !}|&\ll T^{l}\sum_{n=N+1}^\infty T^{-\epsilon n}\ll T^{l-\epsilon N}.
\label{summ}
\end{align}

Substituting this in equation\eqref{suminteg}, we get
\begin{align}
\int_0^{T} f(t)dt&=\sum_{x=0}^{M_2}\sum_{k=0}^N\int_0^M \partial^k(f)(xM)\frac{t^k}{k !}dt\nonumber \\&+\sum_{k=0}^N\partial^k f(M(M_2+1))\int_{0}^{T-MM_2-M}\frac{t^k}{k !}dt+E\nonumber
\\&=\sum_{x=0}^{M_2}\sum_{k=0}^N \partial^k(f)(xM)\int_0^M\frac{t^k}{k !}dt \nonumber\\&+\sum_{k=0}^N\partial^k f(M(M_2+1))\int_{0}^{T-MM_2-M}\frac{t^k}{k !}dt+E\label{summ1}.
\end{align}
Here, $$|E|\ll (M_2+1) T^{l-N\epsilon}. $$
Notice that any $\partial^k f(x)$ can be computed at any $x$ in polynomial in $k $ time. Notice that the total number of operations needed to compute the sum on the right hand side of \eqref{summ} is $O(M_2N)$. We choose $N=(1+l/\gamma)\frac{\gamma}{\epsilon}$ to get the result.
\end{proof}

\appendix
\section{Special functions and lemma \ref{final bound}}
\label{Bessel}

As mentioned earlier, the proof of lemma \ref{final bound} given here can also be found, in a slight different form in \cite{sarnak1}. To keep the paper somewhat self contained, we will prove it again in this section.

Let us recall the definitions and some properties of some special functions and prove proposition  \ref{final bound}.
\begin{definition}
Given, $a,b,c$ any complex numbers and $|z|<1$, the hypergeometric function $F(a,b,c,z)$ is defined by

\begin{equation}
F(a,b,c,z)=\frac{\Gamma(c)}{\Gamma(a)\Gamma(b)}\sum_{n=0}^\infty \frac{\Gamma(n+a)\Gamma(n+b)}{\Gamma(n+c)n!}z^n.
\label{hypergeometric}
\end{equation}

There is an analytic continuation of this function to the whole complex plane except a branch cut from 1 to infinity.
\end{definition}

We will use the following well known transformation property of Hypergeometric functions (see \cite[(9.132)]{integralssums}).

\begin{align}
F(a,b,c,z)&=\frac{\Gamma(c)\Gamma(b-a)}{\Gamma(b)\Gamma(c-a)}(1-z)^{-a}F(a,c-b,1+a-b,\frac{1}{1-z})\nonumber \\& +\frac{\Gamma(c)\Gamma(a-b)}{\Gamma(a)\Gamma(c-b)}(1-z)^{-b}F(b,c-a,1+b-a,\frac{1}{1-z}).
\label{hypergeom}
\end{align}

We also need the following asymptotic expansion for large arguments of the Gamma function (\cite[8.327]{integralssums}) given as

\begin{equation}
 \Gamma(z)=z^{z-\one}e^{-z}\sqrt{2\pi}(1+O(1/|z|) ).
\end{equation}
Hence for $z=\sigma+it$, $\sigma$ fixed and large $|t|$ we get,
\begin{equation}
\Gamma(\sigma+it)=e^{i\frac{\sgn(t)\pi(\sigma-\one)}{2}}e^{-\frac{\pi |t|}{2}}(\frac{|t|}{e})^{it}|t|^{\sigma}(\frac{2\pi}{|t|})^{\one}(1+O(t^{-1})).
\label{gamma assymptotics}
\end{equation}
 Here, $t$ is positive and the constant in $O $ depends on $\sigma$.
We use the following result about the Bessel functions (\cite[(6.699)]{integralssums}):

\begin{result}
 Let $r$ be any fixed complex number such that $|\im(r)|< \one $ and $T,T'>0$, we have
\begin{align}
\label{expintegral1} \int_0^\infty \cos( T_1t)K_{ir}( t)t^{iT-1/2} dt&= 2^{iT-3/2}\Gamma(\frac{ir+iT+\one}{2})\Gamma(\frac{-ir+iT+\one}{2})\\&F(\frac{ir+iT+\one}{2},\frac{-ir+iT+\one}{2},\one,-T_1^2 ).\nonumber
\end{align}
\end{result}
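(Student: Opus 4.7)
The plan is to prove the identity in two stages: first, for $|T_1|<1$, establish it by expanding $\cos(T_1 t)$ as a power series, applying Fubini--Tonelli to interchange sum and integral, using the classical Mellin transform of $K_{ir}$, and identifying the resulting series as a Gauss hypergeometric function; second, extend to arbitrary real $T_1$ by analytic continuation, since the application in lemma \ref{final bound} requires $T_1=O(T)$ which is far outside the disc of convergence.

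For the interchange on $|T_1|<1$, I will dominate $\cos(T_1 t)$ by $\cosh(|T_1|t)\leq e^{|T_1|t}$ and use the standard asymptotics $K_{ir}(t)\sim \sqrt{\pi/(2t)}\,e^{-t}$ as $t\to\infty$ together with $K_{ir}(t)=O(t^{-|\im r|})$ as $t\to 0^+$; the hypothesis $|\im r|<\one$ makes $t\mapsto e^{|T_1|t}|K_{ir}(t)|t^{-\one}$ integrable on $(0,\infty)$. Fubini then gives
\[\int_0^\infty \cos(T_1 t)K_{ir}(t)t^{iT-\one}\,dt = \sum_{n=0}^\infty \frac{(-T_1^2)^n}{(2n)!}\int_0^\infty K_{ir}(t)\,t^{iT+2n-\one}\,dt.\]
Each inner integral is evaluated by the classical Mellin transform
\[\int_0^\infty K_{ir}(t)\,t^{s-1}\,dt = 2^{s-2}\,\Gamma\!\bigl(\tfrac{s+ir}{2}\bigr)\Gamma\!\bigl(\tfrac{s-ir}{2}\bigr),\qquad \re s>|\im r|,\]
which applies at $s=iT+2n+\one$ for every $n\geq 0$ since $\re s=2n+\one>|\im r|$. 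Substituting, using the Legendre duplication identity $(2n)!=4^n n!\,(\one)_n$, and the Pochhammer relation $\Gamma(a+n)=\Gamma(a)(a)_n$, the factors of $4^n$ cancel and the series collapses to
\[2^{iT-3/2}\,\Gamma\!\bigl(\tfrac{ir+iT+\one}{2}\bigr)\Gamma\!\bigl(\tfrac{-ir+iT+\one}{2}\bigr)\,F\!\bigl(\tfrac{ir+iT+\one}{2},\tfrac{-ir+iT+\one}{2},\one;-T_1^2\bigr),\]
which is the claimed right-hand side.

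To pass to arbitrary real $T_1$, I will verify that both sides are real-analytic in $T_1$. The left-hand side extends holomorphically to the strip $|\im T_1|<1$ by differentiation under the integral sign, as the integrand stays dominated by $e^{|\im T_1|t}|K_{ir}(t)|t^{-\one}$ on compacta in the strip. The right-hand side is a product of ($T_1$-independent) gamma factors with $F(a,b,\one;-T_1^2)$, and since $-T_1^2\in(-\infty,0]$ for real $T_1$, it stays off the branch cut $[1,\infty)$ of ${}_2F_1$ and so is real-analytic on all of $\mathbb{R}$. Uniqueness of analytic continuation then transports the equality from $|T_1|<1$ to every $T_1\in\mathbb{R}$.

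The chief obstacle is precisely the analytic continuation step: the termwise computation produces a convergent hypergeometric series only inside the small disc $|T_1|<1$, while lemma \ref{final bound} needs the identity at $T_1=O(T)$ with $T$ arbitrarily large. Making the continuation argument rigorous--tracking both that the left-hand side is holomorphic in a strip containing the required real axis and that no branch singularity of $_2F_1$ is encountered along $-T_1^2\in(-\infty,0]$--is the only delicate point; the remaining ingredients (Mellin transform of $K_{ir}$, Legendre duplication, Pochhammer bookkeeping) are standard.
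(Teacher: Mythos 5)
Your derivation is correct, but it takes a genuinely different route from the paper: the paper offers no proof of this Fact at all, simply quoting it as formula (6.699) of Gradshteyn--Ryzhik, whereas you supply a self-contained derivation. Your computation checks out: the termwise integration is legitimate for $|T_1|<1$ since $\cosh(|T_1|t)\,|K_{ir}(t)|\,t^{-\one}$ is integrable there (using $K_{ir}(t)\asymp e^{-t}t^{-\one}$ at infinity and $K_{ir}(t)=O(t^{-|\im r|})$ at zero, where $|\im r|<\one$ is exactly what is needed), the Mellin transform $\int_0^\infty K_{ir}(t)t^{s-1}dt=2^{s-2}\Gamma(\frac{s+ir}{2})\Gamma(\frac{s-ir}{2})$ applies at $s=iT+2n+\one$ since $\re s=2n+\one>|\im r|$, and the Pochhammer bookkeeping via $(2n)!=4^n n!\,(\one)_n$ does collapse the series to the stated hypergeometric expression. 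The continuation step is also sound: the left side is holomorphic in the strip $|\im T_1|<1$ (which contains all of $\mathbb R$), and for $T_1$ in that strip one has $-T_1^2\in[1,\infty)$ only when $\re T_1=0$ and $|\im T_1|\geq 1$, so the right side is holomorphic there as well and the identity propagates from the disc $|T_1|<1$ to the whole real axis. What your approach buys is a proof that actually covers the regime the paper needs ($T_1=C'T$ with $T$ large), which a naive reading of the series definition of $F$ would not; what the paper's citation buys is brevity, and implicitly the same continuation is built into the tabulated formula, which is stated for the analytically continued $F$. The only cosmetic caveat is that the Fact as printed has the typo $T'$ for $T_1$; your proof correctly treats $T_1$ as the relevant parameter.
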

Let us use \eqref{hypergeom} to get
\begin{align}\label{hypergeom2}
 &F(\frac{ir+iT+\one}{2},\frac{-ir+iT+\one}{2},\one,-T_1^2 )\\&=\frac{\Gamma(\one)\Gamma(-ir)(1+T_1^2)^{\frac{-ir-iT-\one}{2}}}{\Gamma(\frac{-ir+iT+\one}{2})\Gamma(\frac{\one-ir-iT}{2} )}F(\frac{ir+iT+\one}{2},\frac{ir-iT+\one}{2},ir+1,\frac{1}{1+T_1^2} )\nonumber\\&+\frac{\Gamma(\one)\Gamma(ir)(1+T_1^2)^{\frac{ir-iT-\one}{2}}}{\Gamma(\frac{ir+iT+\one}{2})\Gamma(\frac{ir-iT+\one}{2} )}F(\frac{iT-ir+\one}{2},\frac{-ir-iT+\one}{2},-ir+1,\frac{1}{1+T_1^2} ).\nonumber
\end{align}

Now
$$F(a,b,c,z)=1+O(|abz/c|)$$
uniformly for
$$|z|\max_{l\geq 0}|{\frac{(a+l)(b+l)}{(c+l)(l+1)}}|\leq \one$$
(see \cite{good} and \cite{sarnak}). Hence we can get a positive constant $B$
depending only on $r$ such that for any $B'>B $,
both $|F(\frac{ir+iT+\one}{2},\frac{ir-iT+\one}{2},ir+1,\frac{1}{1+B'^2T^2}
)-1|$ and
$|F(\frac{iT-ir+\one}{2},\frac{-ir-iT+\one}{2},-ir+1,\frac{1}{1+B'^2T^2}
)-1|$ are less than or equal to $0.1$.
Applying this and \eqref{hypergeom2} to \eqref{expintegral1} we get a constant $C>B$ such that
for $T_1\geq CT$ we have

\begin{equation}
 \label{expintegral2}\int_0^\infty \cos( T_1t)K_{ir}( t)t^{iT-\one} dt=D(T_1,T)(1+r_1+E(T_1,T)(1+r_2) ).
 \end{equation}
 Here $|r_1|, |r_2|\leq 0.1$, \begin{equation}\label{Ddef}D(T_1,T)=\frac{2^{iT-3/2}\Gamma(\frac{ir+iT+\one}{2})\Gamma(\one)\Gamma(-ir)(1+T_1^2)^{\frac{-ir-iT-\one}{2}}}{\Gamma(\frac{-ir-iT+\one}{2})}.\end{equation}
  and
\begin{equation}
 \label{Edef}
E(T_1,T)=\frac{\Gamma(ir)\Gamma(\frac{-ir+iT+\one}{2})\Gamma(\frac{-ir-iT+\one}{2})(1+T_1^2)^{ir}}{\Gamma(-ir)\Gamma(\frac{ir+iT+\one}{2})\Gamma(\frac{ir-iT+\one}{2} )}.
\end{equation}
Notice that for a fixed $T$ and real $r$, the argument of $\frac{\Gamma(ir)\Gamma(\frac{-ir+iT+\one}{2})\Gamma(\frac{-ir-iT+\one}{2})}{\Gamma(-ir)\Gamma(\frac{ir+iT+\one}{2})\Gamma(\frac{ir-iT+\one}{2} )} $ is fixed. On the other hand $(1+T_1^2)^{ir} $ is a rapidly oscillating function of $T_1$. Hence we can choose a suitable $C'>B$ such that $|1+r_1+E(C_1T,T)(1+r_2)|>\one  $.

\noindent Similarly if $\re(ir)$ is non zero, then using asympotics of the $\Gamma$ function to \eqref{Edef}, we can see that the behaviour of the absolute value of $E(T_1,T) $ is dominated by $(1+T_1^2)^{ir} $, for $T_1>T$. In other words, we can choose a suitable $C'>B$ such that $|1+r_1+E(C'T,T)(1+r_2)|>\one  $. Moreover, it can be computed in $O(1)$ time. See \cite[8.1]{vishe}.

Using the asymptotics of $\Gamma$ function, it is easy to get a simple bound $|D|\gg T^{-1} $. The constant only depends on $r$. We have thus proved lemma \ref{final bound}.

\bibliographystyle{amsalpha}

\end{document}